\documentclass{amsart}
\usepackage{amsmath}
\usepackage{amssymb}
\usepackage{amsthm}
\usepackage{amsaddr}
\usepackage{mathrsfs}
\usepackage[all,cmtip]{xy}
\usepackage{times}
\usepackage{graphicx}
\usepackage{enumerate}
\setlength\parindent{0truemm}
\setlength\headheight{14pt}
\setlength\parskip{1.5\parskip}

\numberwithin{equation}{section}

\usepackage{graphicx}
\usepackage{tikz-cd}
\usepackage[all,cmtip]{xy}
\usepackage{mathrsfs}
\usepackage{mathtools}


\def\lhd{\triangleleft}
\def\ra{\rightarrow}
\def\xra{\xrightarrow}
\def\iff{\Longleftrightarrow}

\def\emb{\hookrightarrow}

\newcommand{\sslash}{\mathbin{\! / \mkern-6mu/  \!}}

\def\A{{\mathcal A}}
\def\B{{\mathcal B}}

\def\C{{\mathcal C}}

\def\dom{{\mathbf d}}

\def\F{{\mathcal F}}

\def\H{{\mathscr H}}

\def\L{{\mathfrak L}}
\def\M{{\mathcal M}}

\def\Z{{\mathbb Z}}

\def\ZG{{\mathbb Z}G}


\def\varep{\varepsilon}
\def\ph{\varphi}


\def\st{\operatorname{star}}
\def\cost{\operatorname{costar}}

\def\id{\operatorname{id}}
\def\im{\operatorname{im}}

\def\Der{\operatorname{Der}}

\def\gpd{\sf \textbf{Gpd}}
\def\homogpd{\textbf{OGpd}}

\newcommand{\Mod}[1]{\operatorname{Mod}_{#1}}
\newcommand{\Ext}[1]{\operatorname{Ext}_{#1}}

\def\dom{\mathbf{d}}
\def\ran{\mathbf{r}}

\def\leq{\leqslant}
\def\geq{\geqslant}
\def\<{\langle}
\def\>{\rangle}

\newtheorem{theorem}{Theorem}[section]
\newtheorem{prop}[theorem]{Proposition}
\newtheorem{lemma}[theorem]{Lemma}
\newtheorem{cor}[theorem]{Corollary}

\theoremstyle{definition}
\newtheorem{remark}[theorem]{Remark}
\newtheorem{example}[theorem]{Example}

\newtheorem{definition}[theorem]{Definition}

\setlength\parskip{2truemm}

\title{Cohomology and extensions of ordered groupoids}

\author{B. O. Bainson and N. D. Gilbert}

\address{
School of Mathematical and Computer Sciences\\
and the Maxwell Institute for the Mathematical Sciences,\\
Heriot-Watt University, Edinburgh EH14 4AS, U.K.}
\email{bob30@hw.ac.uk, N.D.Gilbert@hw.ac.uk}

\date{}
\pagestyle{plain}

\thanks{Some of these results are presented in a different form as part of the first author's PhD thesis \cite{bob}.  
The support of a MACS Global Platform Studentship from Heriot-Watt University is gratefully acknowledged.}

\begin{document}

\begin{abstract} 
We adapt and generalise results of Loganathan on the cohomology of inverse semigroups to the cohomology of ordered groupoids.  We then derive a five-term exact sequence in cohomology from an extension of ordered groupoids, and show that
this sequence leads to a classification of extensions by a second cohomology group.  Our methods use structural ideas in cohomology as far as possible, rather than computation with cocycles.
\end{abstract}

\subjclass[2010]{Primary 20L05 ; Secondary 20J06, 18G15}

\maketitle

\thispagestyle{empty}

\section*{Introduction} 
Ordered groupoids arose in the work of Ehresmann \cite{Ehres} on the foundations of differential geometry, as an algebraic model of a
pseudogroup of transformations in which the ordered structure models restriction of domain.  Groupoids as models of
symmetry are discussed in \cite{Wei}, and ordered groupoids offer a general algebraic framework for the discussion of partial symmetry.
There are close connections with the theory of inverse semigroups, since the category of inverse semigroups can be viewed 
as a subcategory of the category of ordered groupoids.  These connections are an important theme of the book \cite{MV}
by Lawson, and recent research has both generalised known results from inverse semigroups to ordered groupoids
(see, for example \cite{Gilb, Lw1}), and has 
used ordered groupoids as a more natural setting for the development of structural ideas (see, for example,
\cite{AGM, MO, LwSt, St}).

A cohomology theory for inverse semigroups was introduced by Lausch \cite{Ls}, who used it to classify extensions
by an inverse semigroup $S$ of an $S$-module $\A$, unifying earlier results of Coudron \cite{Cou} and
D'Alarcao \cite{DA}.  Lausch's paper introduces the category $\Mod{S}$ of $S$--modules, establishes that it has enough
injectives, constructs a cohomology functor from $\Mod{S}$ to abelian groups, and constructs projective resolutions of a 
homogeneous module $\Delta \Z$ to compute cohomology.  He then shows directly, using 
a $2$--cocycle or \textit{factor set} associated to an extension,
that extensions of $\A$ by $S$ are classified by the cohomology group $H^2(S^1,\A^0)$ of the inverse monoid $S^1$.

Loganathan \cite{Lg} recognised that Lausch's category $\Mod{S}$ could be obtained as the category of modules of a
left-cancellative category $\L(S)$ derived from $S$ (and that $\L(S)$ was equivalent to an earlier construction of
Leech \cite{Lee}).  Loganathan then developed a number of results about the homology and cohomology of $\L(S)$,
particularly concerning its relationship to the group (co)homology of the maximal group image $\hat{S}$ of $S$,
and the cohomology of the semilattice $E(S)$ of idempotents of $S$.  In a subsequent paper \cite{Lg2} (in the more general 
setting of regular semigroups), Loganathan recovers Lausch's classification of extensions, again using an essentially
computational approach based on  cocycles.  In his PhD thesis, J. Matthews \cite{Mthesis} returned to Lausch's approach, to establish a cohomology theory for arbitrary ordered groupoids and to extend the use of factor sets to classify extensions.  The resulting computational complications are considerable, but are well-handled in \cite{Mthesis}.

Our aim in this paper is to generalise Loganathan's account of the cohomology of inverse semigroups \cite{Lg} to
ordered groupoids, and to give a conceptual approach to the classification of extensions that does not involve
computation with cocycles.  This is carried out for groups, for example, in \cite[section 6.10]{HS}.  We shall adapt the
account given by Gruenberg in \cite{Grue_paper,Grue}.    After a review in section \ref{ord_gpd} of the basic facts about 
ordered groupoids that we need, we turn in sections \ref{modules} and \ref{cohom} to modules over ordered groupoids and 
ordered groupoid cohomology.  These two sections are closely based on  \cite{Lg}, and in them we check that Loganathan's constructions of some adjoint functors for modules over inverse semigroups can be generalised to
ordered groupoids, and that key identifications of cohomology groups with certain $\Ext{}$--groups remain valid.  Apart
from these generalisations and adaptations, we make no claim of any originality in these two sections, although we do proffer
some additional detail omitted in \cite{Lg}.  In section \ref{extensions} we discuss extensions of ordered groupoids
involving an identity-separating map.    Our main result (Theorem \ref{five}) is the construction of a five-term exact sequence in cohomology, which is obtained using an intermediate construction of a \textit{derived module} of a functor
between ordered groupoids, a generalisation of Crowell's derived module of a group homomorphism \cite{Crow}.   The five-term
exact sequence is then the principal component in the classification of extensions by $H^2(Q^I,\A^0)$ in section \ref{classify}.
A crucial step is to show that, up to equivalence, any extension of a $Q$--module $\A$ by an ordered groupoid $Q$ can be obtained as a quotient of a semidirect product ${\mathbb F}(Q) \ltimes \A$, where ${\mathbb F}(Q)$ is a free ordered groupoid.  Here we need to
use the quotient of ordered groupoids introduced in \cite{AGM}, whose construction is recalled in section \ref{ord_gpd}.

Related work on the cohomology of categories can be found in \cite{Webb1, Webb2, Xu}.  In particular, Webb \cite{Webb2}
obtains five-term exact sequences in the (co)homology of categories.  However, since the constructions in this paper
originate from an ordered groupoid $G$ rather than from its associated category $\L(G)$, there are resulting differences at key points, such as in the definition of the analogue of the group ring, of the augmentation ideal, and the notion of extension.

\section{Ordered Groupoids}
\label{ord_gpd}
A groupoid $G$ is a small category in which every morphism is invertible. The set of identities of $G$ is denoted by $G_0$, but we shall sometimes use $E(G)$, following the customary notation for the set of idempotents in an  inverse semigroup. We write $g \in G(e,f)$ when $g$ is a morphism starting at $e$ and ending at $f$.  We regard a groupoid as an algebraic structure comprising its morphisms, and compositions of morphisms as a partially defined binary operation (see \cite{HiBook}, \cite{MV}). The identities are then written as $e=g\dom =gg^{-1}$ and $f=g\ran=g^{-1}g$ respectively. The star  and costar of $G$ at an identity $e$ are defined as the sets $\st_{G}(e)=\{ g\in G | gg^{-1}=e \}$ and $\cost_{G}(e)=\{g\in G | g^{-1}g=e\}$ respectively: their intersection is the local group $G_e$.  A groupoid map $\theta:G\ra H$ is just a functor: groupoids together with groupoid maps then constitute the category $\gpd$ of groupoids.

\begin{definition}
An ordered groupoid is a pair $(G, \leq)$ where $G$ is a groupoid and $\leq$ is a partial order defined on $G$, satisfying the following axioms:
\begin{enumerate}
\item[OG1]  $x \leq y  \Rightarrow x^{-1} \leq y^{-1}$, for all $x, y \in G$.
\item[OG2] Let $x, y, u, v\in G$ such that $x\leq y$ and $u\leq v$. Then $xu\leq yv$ whenever the compositions $xu$ and $yv$ exist.
\item[OG3] Suppose $x\in G$ and $e\in G_0$ such that $e\leq x\mathbf{d}$, then there is a unique element $(e|x)$ called the \textit{restriction} of $x$ to $e$ such that $(e|x)\mathbf{d}= e$ and $(e|x)\leq x$.
\item[OG4] If $x\in G$ and $e \in G_0$ such that $e \leq x \mathbf{r}$, then there exist a unique element $(x|e)$ called the \textit{corestriction} of $x$ to $e$ such that $(x|e) \mathbf{r}= e$ and $(x|e) \leq x$. 
\end{enumerate} 
It is easy to see that OG3 and OG4 are equivalent: if OG3 holds then we may define a corestriction $(x|e)$ by $(x|e) = (e|x^{-1})^{-1}$.
\end{definition}
An ordered functor $\phi: G \ra H$ of ordered groupoids is an order preserving groupoid--map, that is $g\phi \leq h\phi$ if $g\leq h$. Ordered groupoids together with ordered functors constitute the category of ordered groupoids, $\homogpd{}$.  
An ordered functor $\phi : G \ra H$ is \textit{identity-separating} if it is injective when restricted to $G_0$.

Suppose $g,h \in G$ and that the greatest lower bound $\ell$ of $g\textbf{r}$ and $h\textbf{d}$ exist, then we define the {\it pseudoproduct} of $g$ and $h$ by $g\ast h= (g|\ell)(\ell|h)$.  An ordered groupoid is called \textit{inductive} if the pair $(G_0,\leq)$ is a meet semilattice. 
In an inductive groupoid $G$, the pseudoproduct is everywhere defined and $(G,\ast)$ is then an inverse semigroup:
see
\cite[Theorem 4.1.8]{MV}

To any ordered groupoid $G$ we associate a category$\L(G)$ as follows. The objects of $\L(G)$ are precisely the objects of $G$ and morphisms are given by pairs $(e,g) \in G_0 \times G$ where $g\dom \leq e$, with $(e,g)\dom=e$ and $(e,g)\ran=g\ran $. The composition of morphisms is defined by the partial product $(e,g)(f,h)= (e, g\ast h)=(e,(g|h \dom)h)$ whenever $g\ran=f$. It is easy to that $\L(G)$ is left cancellative.  
This construction originates in the work of Loganathan \cite{Lg}, and forms the basis of the treatment in \cite{Lg} of the cohomology of inverse semigroups.

Let $G$ be an ordered groupoid and set $G^{I}= G \cup \{I\}$ where $I$ is an identity and $I\notin G$. Set $e\leq I$ for every object $e$ of $G$. It is easy to see that $G^I$ is an ordered groupoid with maximal ordered subgroupoids $G$ and the singleton groupoid $\{I\}$. The associated category $\L(G^I)$ is defined by the set of objects $E(G) \cup \{I\}$ and morphisms comprising of morphisms of $G$ and order maps $\alpha^I_e:I\ra e$ for all $e\in E(G)$ inherited from the extension of the ordering on $G$. 

We shall need the construction of a quotient of an ordered groupoid by a normal ordered subgroupoid given in 
\cite[section 4]{AGM}.

\begin{definition}  An ordered subgroupoid $N$  of an ordered groupoid $G$ is a \textit{normal ordered subgroupoid} if the following axioms are satisfied: \begin{enumerate}
\item[N01] $N$ has the same set of objects as $G$: that is $N$ is \textit{wide}.
\item[N02] Suppose $n\in N$ and $e\leq n\textbf{d}$ for $e\in G_0$, then the restriction $(n|e)$ of $n$ to $e$ is in $N$.
\item[N03] If $n\in N$ and $k,h\in G$ such that: 
\begin{enumerate}
\item[(a)] $k$ and $h$ have an upper bound $g \in G$, that is $k\leq g$ and $h\leq g$,
\item[(b)] $h^{-1}nk$ exists in $G$,
\end{enumerate} then $h^{-1}nk\in N$.
\end{enumerate} 
\end{definition}

A normal ordered subgroupoid $N$ of an ordered groupois $G$ determines an equivalence relation $\simeq_N$ on $G$, defined by 
\begin{align*}
g \simeq_N h  \iff & \; \text{there exist $a,b,c,d \in N$ such that $agb$ and $chd$ are defined in $G$}, \\ 
& \; \text{with $agb \leq h$ and $chd \leq g$.}
\end{align*}
The relation
\[ [g] \leq [h] \iff \text{there exist $a,b \in N$ such that $agb \leq h$} \]
is then a partial order on the set $G \sslash N$ of $\simeq_N$--classes.  If $g^{-1}g \simeq_N hh^{-1}$ then there exist
$p,q \in N$ such that $pp^{-1} \leq g^{-1}g, p^{-1}p=hh^{-1}, q^{-1}q \leq hh^{-1}$, and $qq^{-1}=g^{-1}g$.  Then
$(g|pp^{-1})ph \simeq_N gq(q^{-1}q|h)$ and the composition of $\simeq_N$ classes given by
$[g][h] = [(g|pp^{-1})ph]$ is well-defined and makes $G \sslash N$ into an ordered groupoid (see \cite[Theorem 4.14]{AGM}).

In the case that $N$ is a union of groups, it is easy to see that the relation $\simeq_N$ simplifies to
\[
g \simeq_N h  \iff  \; \text{there exist $a,b \in N$ such that} \; h= agb \,.
\]
In particular, this will apply when $N$ is a $G$--module,

\section{Modules over ordered groupoids}
\label{modules}
We now review the theory of modules over ordered groupoids, following the approach of Loganathan in \cite{Lg} for inverse semigroups. 

\begin{definition} Let $G$ be an ordered groupoid, with associated category $\L(G)$.
A $G$-module $\A$ is a functor $\A:\L(G)\ra \textbf{Ab}$ associating to each $e\in E(G)$ the abelian group $A_e$ and a group homomorphism $\lhd (e,g): A_e \ra A_{g^{-1}g}$ for every morphism $(e,g)\in \L(G)$. The map $\lhd(e,g)$ is the composite of a group homomorphism $\lhd(e,gg^{-1})$ and a group isomorphism $\lhd(gg^{-1},g)$.  Morphisms of $G$--modules are called $G$--maps. We denote the functor category whose objects are $G$--modules and morphisms are $G$--maps by $\Mod{\L(G)}$. 
\end{definition} 

\begin{example}
Let $A$ be an abelian group.  The \textit{constant} module $\Delta A$ is given by $\A_e= A$ for all $e\in E(G)$, and all morphisms
$\lhd(e,g)$ equal to the identity on $A$.
\end{example}

\begin{example}
The \textit{adjoint} module over an ordered groupoid $G$ is defined as the functor $\Z G :\L(G)\ra \textbf{Ab}$ which associates to every $e\in E(G)$ the free abelian group on $\cost_{G}(e)$. A morphism $g\in G(e,y)$ gives a morphism $(\Z G)_e\ra (\Z G)_y$ given on basis elements $h$ of $(\Z G)_e$ by $h\mapsto hg$. For an order map $(e,f)$ we get a morphism $(\Z G)_e\ra (\Z G)_f$ given on basis elements $g$ in $(\Z G)_e$ by the mapping $g \mapsto (f|g)$ where $(f|g)$ is the unique corestriction of $g$ to $f$. 

\begin{lemma}
\label{adjoint_is_free}
If the ordered groupoid $G$ has a maximum identity $1 \in G_0$ then the adjoint module is a free $G$--module (and so is projective).
\end{lemma}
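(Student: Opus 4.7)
The plan is to identify the adjoint module $\Z G$ with the free $G$--module of rank one generated at the maximum identity $1 \in G_0$. In the functor category $\Mod{\L(G)}$, this free module is the representable functor $P_1 := \Z\L(G)(1,-)$, sending an object $f \in E(G)$ to the free abelian group on the hom-set $\L(G)(1,f)$. Projectivity will then follow formally, since $\operatorname{Hom}_{\Mod{\L(G)}}(P_1, -)$ is evaluation at $1$, an exact functor $\Mod{\L(G)} \ra \textbf{Ab}$.

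First I would observe that when $1$ is maximum, a morphism $(1,g) \in \L(G)(1,f)$ is constrained only by $g\ran = f$, because the condition $g\dom \leq 1$ is automatic. Hence $(1,g)\mapsto g$ defines a bijection $\L(G)(1,f)\leftrightarrow \cost_G(f)$, and $\Z$--linear extension yields an isomorphism $P_1(f) \cong (\Z G)_f$ of abelian groups at each object. Next I would verify naturality. Since every morphism $(e,h)$ in $\L(G)$ factors as the order map $(e,h\dom)$ followed by the genuine groupoid morphism $(h\dom,h)$, it suffices to check naturality separately on these two families of generators. In both cases the definition of the adjoint module, combined with the composition formula $(1,g)(e,h) = (1, (g|h\dom)h)$ in $\L(G)$, gives agreement by direct comparison.

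The main obstacle, though essentially routine, is bookkeeping: tracking the distinction between restriction and corestriction, and confirming that the action $g\mapsto (f|g)$ by an order map in the adjoint module matches the composition with an order morphism in $\L(G)$. Once naturality is in place, the resulting isomorphism $\Z G \cong P_1$ exhibits $\Z G$ as a free $G$--module on one generator, and projectivity is then immediate.
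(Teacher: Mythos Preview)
Your proposal is correct and is essentially the same argument as the paper's: the paper's one-line proof ``a basis element $h \in (\Z G)_e$ is uniquely expressible as $1 \lhd (1,h)$'' is precisely the statement that $\Z G$ is free on the single generator $1 \in (\Z G)_1$, which is your identification $\Z G \cong P_1 = \Z\L(G)(1,-)$. You have simply unpacked this into explicit representable-functor language and added the naturality verification that the paper leaves implicit.
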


\begin{proof}
A basis element $h \in (\Z G)_e$ is uniquely expressible as $1 \lhd (1,h)$.
\end{proof}

The \textit{augmentation} map is the $G$--map $\Z G\xrightarrow{\epsilon} \Delta \Z$ defined by $\sum_{g^{-1}g=e} n_gg \mapsto \sum_{g^{-1}g=e} n_g$. Its kernel, denoted by $KG$, is called the \textit{augmentation} module. 
 \end{example}  

\begin{lemma} The set of all elements $g-e$ where $g$ is a non-identity morphism in $\cost_{G}(e)$, forms
a $\Z$--basis of the abelian group $(KG)_e$ \end{lemma}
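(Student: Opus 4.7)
The plan is to exploit that $(\Z G)_e$ is by definition the free abelian group on the set $\cost_G(e)$, together with the fact that $\epsilon$ sends every basis element to $1 \in \Z$. The identity $e$ itself is the \emph{only} identity morphism of $G$ that belongs to $\cost_G(e)$: indeed, if an identity $f$ satisfies $f^{-1}f = e$, then $f = f^{-1}f = e$. So $\cost_G(e)$ partitions as $\{e\} \sqcup \cost_G(e)^*$, where $\cost_G(e)^*$ denotes the non-identity morphisms in the costar. First I would note that each element $g - e$ with $g \in \cost_G(e)^*$ really does lie in $(KG)_e$, since $\epsilon(g-e) = 1 - 1 = 0$.

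For the spanning property, I would take an arbitrary $x \in (KG)_e$ and write it in the basis $\cost_G(e)$ of $(\Z G)_e$ as
\[
x = n_e\, e + \sum_{g \in \cost_G(e)^*} n_g\, g ,
\]
with only finitely many $n_g$ nonzero. Applying $\epsilon$ gives $n_e + \sum_{g} n_g = 0$, so $n_e = -\sum_g n_g$, and substituting yields $x = \sum_{g \in \cost_G(e)^*} n_g (g - e)$, as required.

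For linear independence, suppose $\sum_{g \in \cost_G(e)^*} n_g (g - e) = 0$ in $(\Z G)_e$. Expanding, this becomes
\[
\sum_{g \in \cost_G(e)^*} n_g\, g \; - \; \Bigl(\sum_{g \in \cost_G(e)^*} n_g\Bigr) e \;=\; 0 .
\]
Because $\cost_G(e) = \{e\} \sqcup \cost_G(e)^*$ is a $\Z$-basis for $(\Z G)_e$, every coefficient in this expression must vanish; in particular $n_g = 0$ for each $g \in \cost_G(e)^*$.

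There is no real obstacle here: the argument is the standard observation that, in any augmented free abelian group with distinguished basis element mapping to $1$, the kernel of the augmentation is freely generated by the differences $(\text{basis element}) - (\text{distinguished element})$. The only point worth flagging is the identification of $e$ as the unique identity in $\cost_G(e)$, which is what allows the distinguished basis element to be singled out canonically.
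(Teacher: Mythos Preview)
Your argument is correct and is precisely the standard proof one would expect here. The paper itself states this lemma without proof, treating it as an elementary fact about augmentation ideals; your write-up supplies exactly the routine verification that was omitted, including the one small observation specific to this setting (that $e$ is the unique identity in $\cost_G(e)$).
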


\subsection{Restriction to $G_0$} 
The set of identities $G_0$ is a trivial subgroupoid of $G$, and also a poset, and as such can be regarded as a category $E(G)$
in which there is a unique morphism $x \ra y$ whenever $x \geq y$ in $G_0$.  It is easy to see that $\L(G_0)$ coincides with $E(G)$.
In \cite{Lg}, Loganathan shows that for an inverse semigroup $S$, the inclusion $E(S) \emb \L(S)$ induces a restriction $\Mod{\L(S)}\ra \Mod{E(S)}$ which admits a left adjoint. This construction remains valid for ordered groupoids, as we now show. Let $\B \in \Mod{E(G)}$ and let $B^{(g)}_e$ be a copy of $B_e$ labelled by $g \in G$.  We define 
\[ (\H \B)_e= \bigoplus_{\substack{g\in G \\ g^{-1}g=e}} B^{(g)}_{gg^{-1}}\,.\]
Suppose $g \in G(e,y)$ and let $B^{h}_{hh^{-1}}$ be a summand of $(\H \B)_e$. Then there is a copy $B^{(hg)}_{hh^{-1}}$ of $B_{hh^{-1}}$ in $(\H \B)_y$ labelled by $hg$, and so right multiplication of the labels by $g$ induces a homomorphism $\rho_g: (\H \B)_e\ra (\H \B)_y$. Suppose that $e,f \in G_0$ and that $(e,f)$ is a morphism in $\L(G)$. Let $x \in G$ with $x^{-1}x=e$: then $B^{(x)}_{xx^{-1}}$ is a summand of $(\H \B)_e$. Let $p=(x|f)\dom$. Then $B^{((x|f))}_p$ is a summand of $(\H \B)_f$ and so $(e,f)$ induces a map $(\H \B)_e \ra (\H \B)_f$. In this way, $\H \B$ is a $G$--module. 

\begin{prop}\label{adj} Let $G$ be an ordered groupoid and let $\mathcal{H}$ be the functor $\Mod{E(G)} \ra \Mod{\L(G)}$ defined above.  Then $\H$ is left adjoint to the restriction  $\Mod{\L(G)} \ra \Mod{E(G)}$.\end{prop}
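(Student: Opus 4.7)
The plan is to exhibit, natural in $\A \in \Mod{\L(G)}$ and $\B \in \Mod{E(G)}$, a bijection
\[
\Phi : \Hom_{\L(G)}(\H\B,\A) \longrightarrow \Hom_{E(G)}(\B, \A|_{E(G)}),
\]
identifying $\H$ as a left adjoint to restriction. The correspondence is the obvious one: $\H\B$ is built as a coproduct of copies of the values of $\B$, one copy $B^{(g)}_{gg^{-1}}$ sitting in $(\H\B)_e$ for each $g$ with $g^{-1}g=e$, and it should behave like a left Kan extension of $\B$ along the inclusion $E(G) \emb \L(G)$.

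\textbf{The two directions.} Given a $G$--map $\phi:\H\B \to \A$, restrict to the distinguished summand indexed by the identity $e \in \cost_G(e)$: the component $\phi_e$ sends $B^{(e)}_e = B_e$ into $A_e$, and I would set $\Phi(\phi)_e$ to be this restriction. Conversely, given an $E(G)$--map $\psi:\B \to \A|_{E(G)}$, I define $\hat\psi:\H\B \to \A$ on the summand $B^{(g)}_{gg^{-1}}$ of $(\H\B)_e$ (with $g^{-1}g=e$) by the composite
\[
B_{gg^{-1}} \xrightarrow{\; \psi_{gg^{-1}} \;} A_{gg^{-1}} \xrightarrow{\; \lhd(gg^{-1},g) \;} A_e,
\]
and extend by linearity across the direct sum. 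Setting $\Phi^{-1}(\psi)=\hat\psi$, I then verify $\Phi\Phi^{-1}=\id$ (immediate from the $g=e$ summand, since $\lhd(e,e)$ is the identity on $A_e$) and $\Phi^{-1}\Phi=\id$ (using that any $G$--map out of $\H\B$ is determined on each summand by its value on the $e$--summand through the action of $g$, as I verify in the next step).

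\textbf{Checks for $\hat\psi$ to be a $G$--map.} This is where the work sits, and it splits into the two kinds of morphisms in $\L(G)$. First, for a groupoid arrow $g\in G(e,y)$, the action $\rho_g$ on $\H\B$ permutes summands by right multiplication of labels $h \mapsto hg$, so I need to check that for a generator $b \in B^{(h)}_{hh^{-1}}$ with $h^{-1}h=e$, the equality
\[
\hat\psi_y\bigl(\rho_g(b)\bigr) = \hat\psi_e(b) \lhd (e,g)
\]
holds; both sides unwind to $\psi_{hh^{-1}}(b)\lhd(hh^{-1}, hg)$ after applying the functoriality relation $\lhd(hh^{-1},h)\lhd(e,g)=\lhd(hh^{-1},hg)$ in $\A$. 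Second, for an order map $(e,f)$ with $f \leq e$, a summand $B^{(x)}_{xx^{-1}}$ of $(\H\B)_e$ is sent to the summand $B^{((x|f))}_p$ of $(\H\B)_f$, where $p = (x|f)\dom$; I need to check that the naturality of $\psi$ on the order map $(xx^{-1},p)$ in $E(G)$, combined with the identity $\lhd(xx^{-1},x)\lhd(e,f) = \lhd(xx^{-1},(x|f)) = \lhd(xx^{-1},p)\lhd(p,(x|f))$ in $\A$, produces the required commutativity.

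\textbf{Main obstacle and closing.} The principal difficulty is precisely this second check, since one must track the corestriction $(x|f)$ carefully and reconcile it with the way $\psi$ is only assumed natural for the order (not the groupoid) structure; the key point is that the corestriction decomposes as an order step followed by a groupoid arrow, which matches the factorisation of $\lhd(e,g)$ stated in the definition of a $G$--module. Granted these two verifications, naturality of $\Phi$ in $\A$ is immediate from the definition, and naturality in $\B$ follows from the summand-wise definition of $\hat\psi$. This establishes the adjunction.
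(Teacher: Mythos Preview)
Your proposal is correct and follows essentially the same approach as the paper: both directions of the bijection are defined identically (restriction to the identity-labelled summand in one direction; apply $\psi_{gg^{-1}}$ then act by $g$ in the other), and the verification that the extended map is an $\L(G)$--map is the same argument. The only cosmetic difference is that the paper handles a general $\L(G)$--morphism $(e,f)$ followed by an arrow $h$ in one commutative diagram, whereas you treat groupoid arrows and order maps separately; your decomposition is arguably clearer, and the identity $\lhd(xx^{-1},x)\lhd(e,f) = \lhd(xx^{-1},p)\lhd(p,(x|f))$ you isolate is exactly what drives the commutativity of the paper's diagram.
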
 

\begin{proof}
Given an $E(G)$--module $\B$ and an $\L(G)$--module $\C$, we obtain a natural bijection
$\Mod{E(G)}(\B,\C) \ra \Mod{\L(G)}(\H\B,\C)$ as follows.

Given $\psi \in \Mod{\L(G)}(\H\B,\C)$, we define $\phi \in \Mod{E(G)}(\B,\C)$ by setting $\phi_e: B_e \ra C_e$ to be the restriction
$\psi|B_e^{(e)}$.  This obviously defines an $E(G)$--map $\B \ra \C$.  On the other hand, given $\phi \in \Mod{E(G)}(\B,\C)$ we define $\psi \in \Mod{\L(G)}(\H\B,\C)$, where
\[ \psi_e : \bigoplus_{\substack{g \in G \\ g^{-1}g=e}} B_{gg^{-1}} \ra C_e \,,\]
by setting 
\[ \psi_e|B_{gg^{-1}}^{(g)} = \alpha_g^{-1} \phi_{gg^{-1}} \alpha_g \,.\]
where $\alpha_g$ is the action of $g \in G$ mapping $B_{gg^{-1}} \ra B_{g^{-1}g}$ and $C_{gg^{-1}} \ra C_{g^{-1}g}$.
For $e,f \in E(G)$ with $e \geq f$, let $\beta^e_f: B_e \ra B_f$ and $\gamma^e_f; C_e \ra C_f$ be the maps giving the $E(G)$--module structure on $\B$ and $\C$
and, for $g \in G$ with $g^{-1}g =e$, consider the following diagram:

\bigskip
\centerline{
\xymatrixcolsep{4pc}
\xymatrixrowsep{4pc}
\xymatrix{
B^{(g)}_{gg^{-1}} \ar[d]_{\beta^e_f} & B^{(gg^{-1})}_{gg^{-1}} \ar[l]_{\alpha_g} \ar[d]_{\beta^{gg^{-1}}_{(g|f)\dom}}
\ar[r]^{\phi_{gg^{-1}}} & C_{gg^{-1}} \ar[r]^{\alpha_g} \ar[d]^{\gamma^{gg^{-1}}_{(g|f)\dom}} & C_e \ar[d]^{\gamma^e_f}\\
B^{((g|f))}_{(g|f)\dom} \ar[d]_{\alpha_h} & B^{((g|f)\dom)}_{(g|f)\dom} \ar[l]_{\alpha_{(g|f)}} \ar[r]^{\phi_{(g|f)\dom}} \ar@{=}[d]
& C_{(g|f)\dom} 
\ar[r]^{\alpha_{(g|f)}} \ar@{=}[d] & C_f \ar[d]^{\alpha_h} \\
B^{((g|f))}_{(g|f)\dom} & B^{((g|f)\dom)}_{(g|f)\dom} \ar[l]^{\alpha_{g \ast h}} \ar[r]_{\phi_{(g|f)\dom}} & C_{(g|f)\dom} 
\ar[r]_{\alpha_{g \ast h}} & C_{h^{-1}h}
}
}

\bigskip
Each of the six constituent squares here is commutative for obvious reasons, and so the whole diagram commutes, and shows that 
$\psi$ as defined from $\phi$ above is a $G$--map.  Moreover, it is easy to see that the given constructions that define $\phi$ from
$\psi$ and $\psi$ from $\phi$ are inverse.
\end{proof}

\subsection{Ordered groupoids with an adjoined identity} 
The inclusion $G \ra G^I$ induces a functor $\Mod{\L(G^{I})} \ra \Mod{\L(G)}$ which associates each $G^I$--module with the restricted module over the maximal ordered subgroupoid $G$ of $G^I$.  We will show that this has  a right adjoint . A parallel result is found in \cite{Lg} for inverse monoids.

Given  $\A\in \Mod{\L(G)}$ we define $\A^I$ by 
\[ \A^I_e =\left\{
  \begin{array}{lr}
    \A_e & \text{if}~  e\neq I \\
   \lim^{E(G)} \A &\text{if}~ e=I
  \end{array} \right. \] where $e \in E(G) \cup \{ I \}$.

\begin{lemma}Let $G$ be an ordered groupoid and $\A \in \Mod{\L(G)}$. Then $\A^I$ is a $G^I$--module. \end{lemma}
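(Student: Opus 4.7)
The plan is to exhibit $\A^I$ as a covariant functor $\L(G^I) \to \textbf{Ab}$ extending the given functor $\A$. Since the object assignment is prescribed, I must define the action on morphisms and verify the functor axioms. The morphisms of $\L(G^I)$ not already present in $\L(G)$ are exactly the identity $(I,I)$ and pairs $(I,g)$ with $g \in G$, the case $g = e \in E(G)$ recovering the order map $\alpha^I_e$. The key ingredient is that $\A^I_I = \lim^{E(G)} \A$ carries a canonical cone of projections $\pi_e : \A^I_I \to \A_e$, $e \in E(G)$, satisfying $\lhd(e,f) \circ \pi_e = \pi_f$ whenever $f \leq e$ in $E(G)$, because $\A$ restricted to $E(G)$ is precisely the diagram whose limit defines $\A^I_I$.

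I would then set $\lhd(I,I) = \id$ and, for each $g \in G$, define
\[
\lhd(I,g) \; := \; \lhd(g\dom, g) \circ \pi_{g\dom},
\]
which specialises to $\lhd(I,e) = \pi_e$ when $g = e \in E(G)$. Functoriality then needs checking only for compositions of the form $(I, g_1)(e, g_2) = (I, g_1 \ast g_2)$ with $g_1\ran = e$, all other cases being handled either by the hypothesis that $\A$ is already a functor on $\L(G)$ or by the fact that the only morphism into $I$ is the identity. Using that $(g_1 \ast g_2)\dom = g_1\dom$ and that $(g_1\dom, g_1)(e, g_2) = (g_1\dom, g_1 \ast g_2)$ in $\L(G)$, functoriality of $\A$ collapses both $\lhd(I, g_1 \ast g_2)$ and $\lhd(e, g_2) \circ \lhd(I, g_1)$ to the common value $\lhd(g_1\dom, g_1 \ast g_2) \circ \pi_{g_1\dom}$.

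The main substantive point, and really the only obstacle, is seeing that the formula for $\lhd(I,g)$ is independent of the intermediate $e \geq g\dom$ through which one might factor $(I,g) = (I,e)(e,g)$. This is precisely where the cone condition of the limit intervenes: decomposing $\lhd(e,g) = \lhd(g\dom,g) \circ \lhd(e, g\dom)$ in $\L(G)$ and applying the cone condition $\lhd(e, g\dom) \circ \pi_e = \pi_{g\dom}$, one recovers $\lhd(g\dom, g) \circ \pi_{g\dom}$. In this sense, the choice $\A^I_I = \lim^{E(G)} \A$ is forced precisely to absorb these ambiguities, and this is the conceptual content of the lemma — which is why the construction proceeds without any computation with cocycles.
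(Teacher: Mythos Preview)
Your approach is essentially the same as the paper's, but there is a concrete error in the middle paragraph: the claim that $(g_1 \ast g_2)\dom = g_1\dom$ is false in general. A morphism $(e,g_2)$ in $\L(G)$ only requires $g_2\dom \leq e = g_1\ran$, so the pseudoproduct $g_1 \ast g_2 = (g_1|g_2\dom)\,g_2$ has domain $(g_1|g_2\dom)\dom$, which is typically strictly below $g_1\dom$. Thus your definition yields $\lhd(I, g_1\ast g_2) = \lhd\bigl((g_1\ast g_2)\dom,\, g_1\ast g_2\bigr) \circ \pi_{(g_1\ast g_2)\dom}$, and this is not \emph{a priori} the same as $\lhd(g_1\dom,\, g_1\ast g_2) \circ \pi_{g_1\dom}$.

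The repair, however, is already present in your last paragraph. What your ``independence'' argument actually establishes is that $\lhd(e',g)\circ\pi_{e'} = \lhd(g\dom,g)\circ\pi_{g\dom}$ for \emph{every} $e' \geq g\dom$; applying this with $g = g_1 \ast g_2$ and $e' = g_1\dom$ supplies exactly the missing equality. So you have the right ingredients but have wired them up slightly wrongly: the cone condition is not a separate well-definedness issue preliminary to the functoriality check, it \emph{is} the substantive step inside the functoriality check. The paper makes this explicit by naming $x = (g_1|g_2\dom)\dom$ and drawing the commutative diagram whose left triangle is the cone identity $\pi_x = \alpha^{g_1\dom}_x \circ \pi_{g_1\dom}$ and whose central square is functoriality of $\A$ on $\L(G)$.
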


\begin{proof} A morphism in $\L(G^I)$ but not in $\L(G)$ has the form $(I,g)$ for some $g \in G$, and it is the action of such morphisms that we need to explain.  Since $(I,g)=(I,gg^{-1})(gg^{-1},g)$ then it suffices to consider $(I,e)$ for $e \in G_0$: then the map $\lim^{E(G)} \A \ra A_e$ corresponding to $(I,e) \in \L(G^I)$
is the canonical projection $\pi_e$.  Let $\alpha_g : A_{gg^{-1}} \ra A_{g^{-1}g}$ and $\alpha^e_f: A_e \ra A_f$ (where $e \geq f$) be the maps giving the $G$--module structure on $\A$.  Then consider the following diagram, in which $x = (g|hh^{-1})\dom$:

\bigskip
\centerline{
\xymatrixcolsep{4pc}
\xymatrixrowsep{4pc}
\xymatrix{
\lim^{E(G)} \A \ar[r]^{\pi_{gg^{-1}}} \ar[rd]_{\pi_x}
& A_{gg^{-1}} \ar[r]^{\alpha_g} \ar[d]^{\alpha^{gg^{-1}}_x} & A_{g^{-1}g} \ar[d]^{\alpha^{g^{-1}g}_{hh^{-1}}} \\
& A_x \ar[r]_{\alpha_{(g|hh^{-1})}} & A_{hh^{-1}} \ar[r]_{\alpha_h} & A_{h^{-1}h}
}
}

Since $(gg^{-1},x)(x,(g|hh^{-1}))=(gg^{-1},g)(g^{-1}g,hh^{-1}) \in \L(G)$, the central square commutes, and the left-hand triangle commutes by the definition of $\lim$. It follows that $\A^I$ is a $G$--module,
\end{proof}

\begin{prop}The functor $\Mod{\L(G)}\ra \Mod{\L(G^I)}$ defined by $\A \mapsto \A^I$ is  right adjoint to the restriction $\Mod{\L(G^I)}\ra \Mod{\L(G)}$. \end{prop}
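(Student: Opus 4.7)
The plan is to establish, for each $G^I$--module $\B$ and $G$--module $\A$, a natural bijection
\[
\Mod{\L(G^I)}(\B,\A^I) \xrightarrow{\;\cong\;} \Mod{\L(G)}(\B|_{\L(G)},\A).
\]
The forward direction is immediate: a $G^I$--map $\psi:\B\to\A^I$ has components $\psi_e:B_e\to A_e$ at each $e\in G_0$ (since $\A^I_e=\A_e$ for such $e$), and these collectively form a $G$--map because $\psi$ already commutes with every morphism of $\L(G)\subseteq\L(G^I)$.

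The substance of the proof lies in the reverse direction. Given a $G$--map $\phi:\B|_{\L(G)}\to\A$, I would take $\psi_e=\phi_e$ for $e\in G_0$ and construct the remaining component $\psi_I:B_I\to\A^I_I=\lim^{E(G)}\A$ via the universal property of the limit. For each $e\in G_0$ the morphism $(I,e)\in\L(G^I)$ acts on $\B$ as a map $\beta_{I,e}:B_I\to B_e$, and the composites $\phi_e\circ\beta_{I,e}:B_I\to A_e$ should form a cone over the diagram $\A:E(G)\to\textbf{Ab}$. Cone compatibility with the structure maps $A_e\to A_f$ for $e\geq f$ follows from the identity $(I,e)(e,f)=(I,f)$ in $\L(G^I)$ together with $\phi$ being a $G$--map. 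The universal property then yields a unique $\psi_I$ satisfying $\pi_e\circ\psi_I=\phi_e\circ\beta_{I,e}$ for every $e\in G_0$.

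It remains to verify that this extended $\psi$ commutes with every morphism of $\L(G^I)$, and the only nontrivial cases are those of the form $(I,g)$ for $g\in G$. By the preceding lemma, the action of $(I,g)$ on $\A^I$ factors as $\alpha_g\circ\pi_{gg^{-1}}$, while on $\B$ it factorises through $(I,gg^{-1})(gg^{-1},g)$. Unwinding the required identity $\psi_{g^{-1}g}\circ\beta_{I,g}=\alpha_g\circ\pi_{gg^{-1}}\circ\psi_I$ reduces, via these factorisations and the defining property of $\psi_I$, to the compatibility of $\phi$ with $(gg^{-1},g)\in\L(G)$, which is built into its being a $G$--map.

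Finally, the two constructions are inverse on components at identities in $G_0$ by definition, and the uniqueness clause in the universal property of $\lim^{E(G)}\A$ makes them inverse at $I$ as well; naturality in both $\B$ and $\A$ then follows automatically from the pointwise description. I expect the main obstacle to be precisely the verification concerning the $(I,g)$--morphisms, where the factorisation of the action on $\A^I$ supplied by the preceding lemma is indispensable.
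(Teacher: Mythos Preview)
Your proposal is correct and follows essentially the same approach as the paper: both construct the bijection by restricting a $G^I$--map to $\L(G)$ in one direction, and in the other direction use the cone $\{\phi_e\circ\beta_{I,e}\}_{e\in G_0}$ together with the universal property of $\lim^{E(G)}\A$ to define the missing component $\psi_I$, then verify compatibility with morphisms $(I,g)$ via the factorisation $(I,g)=(I,gg^{-1})(gg^{-1},g)$. Your account is, if anything, slightly more explicit about why the two constructions are mutually inverse (invoking uniqueness in the universal property at $I$), whereas the paper simply asserts this is easily seen.
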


\begin{proof}
Let $\B$ be a $G^I$--module and let $\C$ be a $G$--module.  We exhibit a natural bijection
\[ \Mod{\L(G)}(\B|_{\L(G)},\C) \ra \Mod{\L(G^I)}(\B,\C^I) \,.\]
Given a $G$--map $\phi: \B|_{\L(G)} \ra \C$ we define $\psi : \B \ra \C^I$ by extending $\phi$: so $\psi_e = \phi_e$ for $e \ne I$ and
$\psi_I = \beta^I_e \phi_e$.  Since $\phi$ is a $G$--map the following diagram commutes:

\bigskip
\centerline{
\xymatrixcolsep{4pc}
\xymatrixrowsep{1pc}
\xymatrix{
& B_e \ar[r]_{\phi_e} \ar[dd]^{\beta^e_f} & C_e \ar[dd]^{\gamma^e_f} \\ B_I \ar[ur]^{\beta^I_e} \ar[dr]_{\beta^I_f}
& & \\ & B_f \ar[r]_{\phi_f} & C_f
}}
and so there exists an induced map $\psi_I : \B_I \ra \lim^{E(G)} \C$.  This completes the definition of $\psi$ as
a $G^I$--map since the diagram

\bigskip
\centerline{
\xymatrixcolsep{4pc}
\xymatrixrowsep{2pc}
\xymatrix{
B_I \ar[r]_{\psi_I} \ar[d]_{\beta^I_e} & \lim^E \C \ar[d]^{\pi_{gg^{-1}}} \\ B_{gg^{-1}} \ar[d]_{\alpha_g} \ar[r]_{\phi_{gg^{-1}}} & C_{gg^{-1}} \ar[d]_{\alpha_g} \\ B_{g^{-1}g} \ar[r]_{\phi_{g^{-1}g}} & C_{g^{-1}g}
}}
then also commutes.  The correspondence $\phi \mapsto \psi$ is then easily seen to be a bijection, as required.
\end{proof}

Let $\A^0$ be the $G^I$--module obtained from the $G$--module $\A$ by associating to $I$ the trivial group $\{ 0 \}$. It is clear that $\A^0$ admits an $\L(G^I)$--action.  The functor $\Mod{\L(G)} \ra \Mod{\L(G^I)}$ defined by $\A\mapsto \A^0$ is then left adjoint to the restriction $\Mod{\L(G^I)} \ra \Mod{\L(G)}$.

\section{Cohomology of ordered groupoids}
\label{cohom}
We now apply the concept of cohomology of small categories as discussed in \cite{GZ} and \cite{BD} to ordered groupoids, still following Loganathan's approach in \cite{Lg}. Suppose $\A \in \Mod{\L(G)}$. We have $ \Ext{\L(G)}^n(\Delta \Z, \A)= H^n(\Mod{\L(G)}(P, \A))$ where $P$ is a projective resolution of $\Delta \Z$. The \textit{cohomology} of $G$ with coefficients in $\A$ is defined by  
\begin{equation} \label{cohomdef} H^n(G,\A)= \Ext{\L(G)}^n(\Delta \Z, \A) \; .\end{equation}
Properties of $\Ext{}$ as the derived functor of $\lim$ then give the following result.   

\begin{prop}Let $G$ be an ordered groupoid and $\A \in \Mod{\L(G)}$. Then
\begin{enumerate}
\item  $H^0(G,\A)= \lim^{\L(G)} \A$.
\item  $H^n(G,\A)=0$ for $n>0$ and $\A$ injective. 
\end{enumerate}
\end{prop}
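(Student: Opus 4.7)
My plan rests on one structural observation: the functor $\Mod{\L(G)}(\Delta \Z, -)$ on $\Mod{\L(G)}$ is naturally isomorphic to the limit functor $\lim^{\L(G)}(-)$. First I would verify this by unpacking that a $G$--map $\phi : \Delta \Z \to \A$ is a family of homomorphisms $\phi_e : \Z \to A_e$, equivalently a family of elements $a_e = \phi_e(1) \in A_e$, compatible with the $\L(G)$--action in the sense that $a_e \lhd (e,g) = a_{g^{-1}g}$ for every $(e,g) \in \L(G)$. This is precisely the cone condition defining an element of $\lim^{\L(G)} \A$, so the assignment $\phi \mapsto (a_e)_{e \in E(G)}$ is a bijection $\Mod{\L(G)}(\Delta \Z, \A) \cong \lim^{\L(G)} \A$, natural in $\A$.

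With that identification in hand, part (1) is immediate: by definition $H^0(G, \A) = \Ext{\L(G)}^0(\Delta \Z, \A) = \Mod{\L(G)}(\Delta \Z, \A)$, which agrees with $\lim^{\L(G)} \A$.

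For part (2), I would compute $H^n(G, \A) = H^n(\Mod{\L(G)}(P_\bullet, \A))$ from any projective resolution $P_\bullet \to \Delta \Z$ in $\Mod{\L(G)}$; such resolutions exist because $\Mod{\L(G)}$, being a functor category valued in $\textbf{Ab}$, is abelian with enough projectives. When $\A$ is injective, the contravariant functor $\Mod{\L(G)}(-, \A)$ is exact, so applying it to the exact sequence $\cdots \to P_1 \to P_0 \to \Delta \Z \to 0$ yields the exact complex $0 \to \Mod{\L(G)}(\Delta \Z, \A) \to \Mod{\L(G)}(P_0, \A) \to \Mod{\L(G)}(P_1, \A) \to \cdots$, which has vanishing cohomology in all positive degrees. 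Hence $H^n(G, \A) = 0$ for $n > 0$. I anticipate no substantive obstacle: both statements are direct applications of the general homological machinery together with the hom-equals-limit identification.
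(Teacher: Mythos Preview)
Your proposal is correct and matches the paper's approach: the paper gives no detailed proof, merely stating that the proposition follows from ``properties of $\Ext{}$ as the derived functor of $\lim$''. Your argument spells out precisely this---the identification $\Mod{\L(G)}(\Delta \Z, -) \cong \lim^{\L(G)}(-)$ exhibits $\Ext{\L(G)}^n(\Delta \Z, -)$ as the right derived functor of $\lim^{\L(G)}$, whence part~(1) is the degree-zero case and part~(2) is the standard vanishing of right derived functors on injectives.
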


\begin{theorem}\label{semil} Suppose $G$ is an ordered groupoid and let $\A$ be a $G$-module. Then there are natural isomorphisms $H^n(E(G),\A) \cong \Ext{\L(G)}^n(\Z G,\A)$. \end{theorem}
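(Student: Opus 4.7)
The plan is to apply the adjunction of Proposition \ref{adj} at the level of projective resolutions. The key observation is that $\H(\Delta \Z) \cong \Z G$: indeed, for each $e \in E(G)$,
\[ (\H \Delta \Z)_e = \bigoplus_{\substack{g \in G \\ g^{-1}g=e}} \Z^{(g)} \]
is the free abelian group on $\cost_G(e)$, and one checks that the action of $g \in G(e,y)$ on $\H \Delta \Z$ matches the right-multiplication action on $\Z G$, while the action of an order map $(e,f)$ matches corestriction.

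First I would verify that the functor $\H : \Mod{E(G)} \ra \Mod{\L(G)}$ is exact: exactness in these functor categories is tested pointwise, and $(\H \B)_e$ is defined as a direct sum of evaluations of $\B$, hence preserves short exact sequences. Being a left adjoint (by Proposition \ref{adj}) whose right adjoint (restriction) is itself exact (again, exactness is pointwise, and restriction simply discards the action of non-identity morphisms), it follows that $\H$ carries projectives to projectives.

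Now take any projective resolution $P_\bullet \sur \Delta \Z$ in $\Mod{E(G)}$. By the previous paragraph, applying $\H$ yields a projective resolution $\H P_\bullet \sur \H \Delta \Z \cong \Z G$ in $\Mod{\L(G)}$. The adjunction of Proposition \ref{adj} gives, naturally in its arguments, an isomorphism of cochain complexes
\[ \Mod{\L(G)}(\H P_\bullet, \A) \;\cong\; \Mod{E(G)}(P_\bullet, \A|_{E(G)}). \]
Taking cohomology of both sides and using that both $P_\bullet$ and $\H P_\bullet$ are projective resolutions of $\Delta \Z$ and $\Z G$ respectively, we obtain the desired natural isomorphism
\[ H^n(E(G),\A) = \Ext{E(G)}^n(\Delta \Z, \A|_{E(G)}) \;\cong\; \Ext{\L(G)}^n(\Z G, \A). \]

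The main obstacle is the identification $\H(\Delta \Z) \cong \Z G$, which is essentially a bookkeeping exercise but must be done carefully enough to confirm that both the groupoid action and the action of order maps on the two sides agree. Once this and the exactness of $\H$ are in hand, the rest is the standard adjunction-between-abelian-categories transfer of Ext, so no further obstacles arise.
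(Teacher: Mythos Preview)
Your proposal is correct and follows essentially the same route as the paper's proof: take a projective resolution $P_\bullet$ of $\Delta\Z$ in $\Mod{E(G)}$, use the adjunction of Proposition~\ref{adj} (together with exactness of restriction) to see that $\H$ preserves projectives, identify $\H\Delta\Z \cong \Z G$, and then compare $\Ext$--groups via the natural isomorphism $\Mod{\L(G)}(\H P_\bullet,\A) \cong \Mod{E(G)}(P_\bullet,\A)$. If anything you are slightly more careful than the paper, since you explicitly note that $\H$ is exact (needed so that $\H P_\bullet$ remains a resolution and not merely a complex of projectives), a point the paper leaves implicit.
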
 

\begin{proof} The proof is adapted fom \cite{Lg}. The category $\Mod{E(G)}$ has enough projectives and so every $E(G)$--module admits a projective resolution. Choose an $E(G)$--projective resolution $P$ of $\Delta \Z$. The functor  $\H$ from Proposition \ref{adj} is left adjoint to restriction  and so preserves projectives: hence $\H P$ is a projective resolution of $\H \Delta \Z = \ZG$. Then 
\begin{align*}
H^n(E(G),\A) = \Ext{E(G)}^n(\Delta \Z, \A) &= H^n \left( \Mod{E(G)}(P, \A) \right) \\
&= H^n \left( \Mod{\L(G)}(\H P, \A) \right) \\
&= \Ext{\L(G)}^n(\H \Delta \Z, \A) =  \Ext{\L(G)}^n(\Z G, \A) \,. 
\end{align*}
\end{proof}

Suppose $G^I$ is an ordered groupoid obtained from $G$ by adjoining the identity $I$ and consider the augmentation modules $KG$ and $KG^I$.

\begin{lemma}\label{ext}
For $n \geq 0$, there are canonical isomorphisms,  
\[ \Ext{\L(G^I)}^n(KG^I,\A^0)\cong \Ext{\L(G^I)}^n((KG)^0,\A^0)\cong \Ext{\L(G)}^n(KG ,\A) \,.\] 
\end{lemma}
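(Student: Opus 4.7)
My plan is to deduce the first isomorphism from a direct identification of modules, and to obtain the second from the adjunction $-^0 \dashv$ restriction recorded at the end of Section \ref{modules}.

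For the first isomorphism, I would show that $KG^I$ and $(KG)^0$ coincide as objects of $\Mod{\L(G^I)}$. The only morphism of $G^I$ ending at $I$ is $I$ itself, since no element of $G$ has range equal to $I$. Hence $(\ZG^I)_I = \Z$, the augmentation $\epsilon_I$ is an isomorphism $\Z \to \Z$, and $(KG^I)_I = 0$. At any $e \in G_0$ we have $\cost_{G^I}(e) = \cost_G(e)$, so $(KG^I)_e = (KG)_e$ as abelian groups. The $\L(G^I)$-actions agree as well: morphisms of $\L(G)$ act in the same way by restriction from $\ZG^I$ to $\ZG$, while an order morphism $(I,e)$ must act as the zero map in either module because its source is the zero group. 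Hence $KG^I = (KG)^0$ in $\Mod{\L(G^I)}$.

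For the second isomorphism, I would exploit the adjunction $-^0 \dashv$ restriction. Both functors are exact: exactness in these functor categories is tested componentwise, and $-^0$ merely appends a zero at $I$, while restriction simply forgets that component. Being a left adjoint to an exact functor, $-^0$ preserves projectives, and combined with its own exactness this shows that any projective resolution $P_\bullet \to KG$ in $\Mod{\L(G)}$ induces a projective resolution $P_\bullet^0 \to (KG)^0$ in $\Mod{\L(G^I)}$. The natural bijection $\Mod{\L(G^I)}(X^0, Y) \cong \Mod{\L(G)}(X, Y|_{\L(G)})$, applied pointwise with $Y = \A^0$ (so $Y|_{\L(G)} = \A$), then yields an isomorphism of cochain complexes
\[ \Mod{\L(G^I)}(P_\bullet^0, \A^0) \; \cong \; \Mod{\L(G)}(P_\bullet, \A), \]
and taking cohomology gives $\Ext{\L(G^I)}^n((KG)^0, \A^0) \cong \Ext{\L(G)}^n(KG, \A)$.

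The main obstacle is ensuring that $-^0$ sends projective resolutions to projective resolutions; this hinges on the exactness of restriction, which is routine but worth recording. Apart from this the proof is a formal consequence of the adjunction together with the componentwise identification in the first step.
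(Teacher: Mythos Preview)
Your proposal is correct and follows essentially the same approach as the paper: the paper also identifies $KG^I=(KG)^0$ componentwise (noting $(KG^I)_I=\ker(\Z\to\Z)=0$) and then simply says ``the result follows,'' implicitly invoking the adjunction $(-)^0\dashv\text{restriction}$ recorded at the end of section~\ref{modules}. You have spelled out the second step---exactness of both adjoints, preservation of projectives by $(-)^0$, and the resulting isomorphism of Hom-complexes---in detail that the paper omits, but the underlying argument is the same.
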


\begin{proof} The augmentation module $KG^I$ satisfies $(KG^I)_e = (KG)_e$ for $e \ne I$, and
\[ (KG^I)_I = \ker((\Z G^I)_I \ra \Z) = \ker( \Z \ra \Z ) = \{ 0 \} \,.\]
Therefore $KG^I= (KG)^0$, and the result follows.
\end{proof}   

\begin{theorem}\label{identitycohomo} Let  $\A$ be a $G$--module. Then, for $n>0$, we have 
$$H^n (G^I,\A^0) \cong \Ext{\L(G)}^{n-1} (KG,\A).$$ \end{theorem}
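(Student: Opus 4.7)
The plan is to do dimension shifting using the short exact sequence of $G^I$-modules
$$ 0 \ra KG^I \ra \Z G^I \xrightarrow{\varepsilon} \Delta \Z \ra 0 $$
given by the augmentation. The crucial observation is that the ordered groupoid $G^I$ has the adjoined maximum identity $I$, so by Lemma \ref{adjoint_is_free} the adjoint module $\Z G^I$ is free, hence projective, in $\Mod{\L(G^I)}$. Therefore $\Ext{\L(G^I)}^k(\Z G^I, \A^0) = 0$ for every $k \geq 1$.

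Applying $\Mod{\L(G^I)}(-,\A^0)$ to the short exact sequence yields the long exact sequence in $\Ext{\L(G^I)}^\ast(-,\A^0)$. For $n \geq 2$, projectivity of $\Z G^I$ makes the neighbouring Ext terms vanish, producing a natural isomorphism
$$ \Ext{\L(G^I)}^{n-1}(KG^I,\A^0) \xrightarrow{\cong} \Ext{\L(G^I)}^{n}(\Delta \Z, \A^0) = H^n(G^I,\A^0). $$
Combined with Lemma \ref{ext}, which identifies $\Ext{\L(G^I)}^{n-1}(KG^I,\A^0)$ with $\Ext{\L(G)}^{n-1}(KG,\A)$, this gives the desired formula for $n \geq 2$.

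The case $n=1$ requires a little extra care, and is likely the main delicate point: one must show that the initial segment of the long exact sequence forces $\Mod{\L(G^I)}(KG^I,\A^0) \cong H^1(G^I,\A^0)$. I would prove this by showing that both $\Mod{\L(G^I)}(\Delta \Z, \A^0)$ and $\Mod{\L(G^I)}(\Z G^I, \A^0)$ vanish. For the first, a $G^I$-map $\phi : \Delta \Z \ra \A^0$ has component $\phi_I : \Z \ra \A^0_I = 0$ equal to zero, and naturality against the unique order morphism $(I,e) \in \L(G^I)$ (for each $e \in E(G)$), which acts as the identity on $\Delta \Z$, forces $\phi_e = 0$ as well. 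For the second, use the adjunction of Proposition \ref{adj}: $\Mod{\L(G^I)}(\Z G^I, \A^0) = \Mod{\L(G^I)}(\H \Delta \Z, \A^0) \cong \Mod{E(G^I)}(\Delta \Z, \A^0|_{E(G^I)})$, and repeat the same naturality argument at the identity-level.

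Putting these vanishings into the long exact sequence at $n=1$ gives $\Mod{\L(G^I)}(KG^I,\A^0) \cong H^1(G^I,\A^0)$, and again Lemma \ref{ext} converts the left-hand side into $\Mod{\L(G)}(KG,\A) = \Ext{\L(G)}^0(KG,\A)$, completing the proof. The only nontrivial obstacle is the $n=1$ case; once one observes that $\A^0$ is trivial at $I$ and that the adjunction of Proposition \ref{adj} is available, the rest is formal homological algebra.
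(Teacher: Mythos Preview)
Your argument is correct and follows essentially the same route as the paper: dimension-shift along the augmentation sequence $0 \ra KG^I \ra \Z G^I \ra \Delta\Z \ra 0$, use projectivity of $\Z G^I$ from Lemma~\ref{adjoint_is_free}, and invoke Lemma~\ref{ext}. The only minor difference is in the $n=1$ step: the paper shows $\Mod{\L(G^I)}(\Z G^I,\A^0)=0$ directly by writing each basis element as $g = I \lhd (I,g)$ so that $g\psi_e = (I\psi_I)\lhd(I,g)=0$, rather than passing through the adjunction of Proposition~\ref{adj}; your extra verification that $\Mod{\L(G^I)}(\Delta\Z,\A^0)=0$ is harmless but not needed once the former vanishing is known.
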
 

\begin{proof}  
Using \eqref{cohomdef} and Lemma \ref{ext}, the long exact Ext--sequence obtained from the 
short exact sequence $0 \ra KG^I \ra \ZG^I \ra \Delta \Z \ra 0$ of $G^I$--modules is
 \begin{align}\label{lem3} 
\cdots  \ra H^{n}(G^I,\A^0) &\ra \Ext{\L(G^I)}^n(\ZG^I,\A^0)  \ra \Ext{\L(G^I)}^n(KG,\A) \nonumber \\ 
& \ra H^{n+1}(G^I,\A^0) \ra \Ext{\L(G^I)}^{n+1}(\ZG^I,\A^0) \ra \cdots 
\end{align} 
and  for $n=0$ we  have
\[ H^0 (E(G^I),\A^0) = \Ext{\L(G^I)}^0 (\Z G^I,\A^0)=\Mod{\L(G^I)}(\Z G^I,\A^0) \,.\]
Consider a  basis element $g \in (ZG^I)_e$, with $g^{-1}g=e$: then $g = I \lhd (I,g)$, and so for any $G^I$--map
$\psi: \ZG^I \ra \A^0$, we have 
\[ g \psi_e = (I \lhd (I,g))\psi_e = (I \psi_I) \lhd (I,g) = 0 \]
since $(\A^0)_I= \{0\}$.  Hence $\psi$ is trivial, and $\Mod{\L(G^I)}(\Z G^I,\A^0) = \{ 0 \}$. From this, and the fact that $\Z G^I$ is projective by Lemma \ref{adjoint_is_free}, it follows that
$\Ext{\L(G^I)}^n(\Z G^I,\A^0) = 0$ for all $n \geq 0$.  Therefore, the long exact sequence
\eqref{lem3} splits into the isomorphisms claimed in the Theorem.
\end{proof}

\section{Extensions of ordered groupoids}
\label{extensions}
Our aim, in the remainder of the paper, is to show how extensions of ordered groupoids, with quotient $Q$ and kernel a $Q$--module $\A$, are classifed by the second cohomology group $H^2(Q^I,\A^0)$.   This extension theory has appeared in \cite{Mthesis} as a generalisation of that developed by Lausch in \cite{Ls} for inverse semigroups.  We shall give a simpler, more conceptual account which  is based on the approach due to Gruenberg in \cite{Grue} for group extensions.  The main ingredient is a five-term exact sequence in cohomology, associated to an extension, and we shall now proceed to derive this.  The corresponding five-term exact sequence  in group cohomology is well-known and can be found, for example, in \cite[section VI.8]{HS}.

Working in the category of inverse semigroups, Lausch \cite{Ls} defined an extension of a semilattice of abelian groups $\A$ by an inverse semigroup $S$ to be an inverse semigroup $U$ containing $\A$, with an idempotent-separating homorphism 
$\psi : U \ra S$ such that $\A = \{ u \in U : u\psi \in E(S) \}$.  This definition was extended, in the natural way, to ordered 
groupoids by Matthews \cite{Mthesis}.  We shall need a more general version, omitting the requirement that we have a 
semilattice of \textit{abelian} groups.

\begin{definition}
Let $Q$ be an ordered groupoid, and let $N$ be an ordered groupoid that is a disjoint union of groups $N_x, x \in N_0$.  An \textit{extension} of $N$ by $Q$ is an
ordered groupoid $G$ together with an ordered embedding $N \xrightarrow{\iota} G$ and an  identity-separating surjective ordered functor $\phi : G \ra Q$ such that
$N$ is the kernel of $\phi$: that is, $N = \{ g \in G : g\phi \in Q_0 \}$.
\end{definition}

We note some simple consequences of this definition (as in \cite[Proposition10.1]{Mthesis}).

\begin{lemma}
\label{ext_props}
Let $N \xrightarrow{\iota} G \xrightarrow{\phi} Q$ be an extension of $N$ by $Q$.  Then
\begin{enumerate}[(a)]
\item $N$ is a normal ordered subgroupoid of $G$,
\item there is an isomorphism $G \sslash N \ra Q$ of ordered groupoids, mapping $[g]_N \mapsto g \phi$.
\end{enumerate}
\end{lemma}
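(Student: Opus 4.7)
The plan is to verify (a) by checking the axioms N01--N03 directly from the kernel characterisation $N = \phi^{-1}(Q_0)$, and to prove (b) by showing that $\Phi([g]_N) := g\phi$ is a well-defined ordered isomorphism.

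For (a), axiom N01 is immediate: $\phi$ preserves identities, so $G_0 \subseteq N$ and hence $N_0 = G_0$. For N02, given $n \in N$ and $e \leq n\dom$, the restriction $(e|n)$ maps under the ordered functor $\phi$ to an element $\leq n\phi \in Q_0$; any element of $Q$ lying below an identity must itself be an identity by the uniqueness clause of OG3 applied in $Q$, so $(e|n)\phi \in Q_0$ and $(e|n) \in N$. For N03, the identity-separating property combined with $n\phi \in Q_0$ forces $n\dom = n\ran$; writing $e$ for this common identity, the existence of $h^{-1}nk$ forces $h\dom = e = k\dom$, and since both $h$ and $k$ are $\leq g$ with domain $e$, OG3 uniqueness yields $h = k$. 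Thus $h^{-1}nk = h^{-1}nh$, which $\phi$ sends to $(h\phi)^{-1}(h\phi) = (h\phi)\ran \in Q_0$, so $h^{-1}nk \in N$.

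For (b), since $N$ is a disjoint union of local groups, the equivalence $\simeq_N$ simplifies (as noted in the excerpt) to $g \simeq_N h \iff h = agb$ for some $a, b \in N$. The assignment $\Phi([g]_N) := g\phi$ is well-defined: for $ag$ to compose one needs $a\dom = a\ran = g\dom$, so $a\phi$ is the identity at $(g\phi)\dom$ and composes trivially with $g\phi$ (analogously for $b$), giving $(agb)\phi = g\phi$. The map $\Phi$ is a functor and is order-preserving directly from the quotient order and the fact that $\phi$ is ordered. Surjectivity of $\Phi$ is inherited from $\phi$. Injectivity: $g\phi = h\phi$ together with identity-separation yields $g\dom = h\dom$ and $g\ran = h\ran$, so $gh^{-1}$ is defined with $(gh^{-1})\phi \in Q_0$, placing $gh^{-1} \in N$ and giving $g \simeq_N h$.

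The main obstacle is showing that $\Phi^{-1}$ preserves order: given $g\phi \leq h\phi$ in $Q$, one must produce $a, b \in N$ with $agb \leq h$. The strategy is to form the double restriction $h' := (g\dom \mid h \mid g\ran) \in G$, which satisfies $h' \leq h$ and, by uniqueness of restrictions in $Q$, satisfies $h'\phi = g\phi$. Injectivity of $\Phi$ then forces $g \simeq_N h'$, so $g = ah'b$ for some $a, b \in N$; rearranging, $a^{-1}gb^{-1} = h' \leq h$, which proves $[g]_N \leq [h]_N$. The crucial ingredient is the order-reflection of the bijection $\phi|_{G_0} : G_0 \to Q_0$, needed to pass from $(g\dom)\phi \leq (h\dom)\phi$ and $(g\ran)\phi \leq (h\ran)\phi$ in $Q_0$ back to $g\dom \leq h\dom$ and $g\ran \leq h\ran$ in $G_0$, so that the double restriction $h'$ is actually defined in $G$.
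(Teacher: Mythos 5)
Your verification of (a) is sound, and most of (b) goes through: well-definedness of $[g]_N \mapsto g\phi$, injectivity, surjectivity and order-preservation are all argued correctly (for functoriality you should add the small observation that, $N$ being a union of groups, two identities are $\simeq_N$-related only if they are equal, so that whenever $[g][h]$ is defined one has $g\ran = h\dom$ and $[g][h]=[gh]$). Note that the paper itself states this lemma without proof, citing Matthews' thesis, so the comparison here is purely on correctness.

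The genuine gap is exactly at the step you yourself single out as ``the crucial ingredient'': you use, but never establish, that the bijection $\phi|_{G_0} \colon G_0 \to Q_0$ \emph{reflects} the order. Nothing in the paper's stated definition of an extension (an ordered embedding $\iota$, an identity-separating surjective ordered functor $\phi$, and $N=\{g \in G : g\phi \in Q_0\}$) yields this. For instance, let $G=N$ be the trivially ordered groupoid consisting of two identities $e,f$ (two trivial groups), and let $Q$ consist of two identities with $x < y$; the functor $e \mapsto x$, $f \mapsto y$ is ordered, identity-separating and surjective with kernel $N$, yet $e \not\leq f$ in $G_0$, so the restriction $(g\dom|h)$ your argument needs cannot be formed, and indeed $[g]_N \mapsto g\phi$ is then a bijective ordered functor whose inverse fails to preserve order. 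So order-reflection on objects must either be derived from some further feature of the situation or be treated as an implicit hypothesis of the lemma; in Lausch's inverse-semigroup setting it is automatic, because the natural partial order on idempotents is determined by the multiplication ($e \leq f \iff ef=e$) and hence is reflected by any injective homomorphism, but for ordered groupoids the order is extra structure and that argument is unavailable. Granting order-reflection, the remainder of your restriction argument is correct --- and can even be shortened: the single restriction $(g\dom|h)$ already satisfies $(g\dom|h)\phi = g\phi$, and identity-separation forces $(g\dom|h)\ran = g\ran$, so the corestriction step is unnecessary.
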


Two extensions $G$ and $H$ of $N$ by $Q$ are \textit{equivalent} if there exists an ordered functor $\mu$ making
the following diagram commute:
\begin{center} 
\begin{tikzcd} & G\ar{dd}{\mu} \ar{dr} & \\ N \ar{ru} \ar{rd}& & Q \\ & H \ar{ru} & \end{tikzcd}
\end{center} 

Given a $Q$--module $\A$ we can define the {\em semidirect product}
or {\em action groupoid} $Q \ltimes A$ (as defined more generally by Brown \cite{Br2} in the unordered case, and by Steinberg 
\cite{St} for ordered groupoids) which will be an extension of $\A$ by $Q$.  
The set of arrows of $Q \ltimes A$ is $\{ (q,a) : a \in A_{q \ran} \}$, the set of objects is $Q_0$, and the domain and range maps are defined by
$(q,a) \dom = q \dom \; \text{and} \; (q,a)\ran = q\ran$.
The composition is
\[ (g,a)(h,b) = (gh,(a \lhd h)+b) \]
defined when $g \ran = h \dom$, and the ordering is componentwise.  

\subsection{Five-term exact sequences}
The preliminary step in constructing the five-term exact sequence is to assign a short exact sequence of modules to an extension of ordered groupoids. This is a generalisation of the ideas of Crowell in \cite{Crow} for groups. Our approach is adapted from the account in \cite{BHS} for groupoids that are trivially ordered.

The following definitions are taken from \cite{Nouf} and generalise definitions given for inverse semigroups in \cite{Gi4}.
 
\begin{definition} Suppose that $G \xrightarrow{\theta} Q$ is an ordered morphism of ordered groupoids and let $\B$ be a $Q$--module. Then an order-preserving function $f:G\ra \B$ is called an ordered $\theta$--\textit{derivation} if: 
\begin{enumerate}
\item $gf \in \B_{(g^{-1}g)\theta}$,
\item $(gh)f= (gf \lhd h\theta) + hf$, whenever $gh$ is defined in $G$.
\end{enumerate} 
The set $\Der_{\theta}(G, \B)$ of all $\theta$--derivations $G \ra \B$ is an abelian group under pointwise addition.  If
$\theta =  \id_G$ we just write $\Der(G,\B)$ for this group.
\end{definition}

\begin{definition}Suppose that $G\xrightarrow{\theta} Q$ is an ordered morphism. Then its \textit{derived} module is 
a $Q$--module $D_{\theta}$ together with a $\theta$--derivation $\delta$ such that, for each $\theta$--derivation 
$f: G \ra \A$ to a $Q$--module $\A$, there exists a unique $Q$--map $\hat{f} : D_{\theta} \ra \A$ with
$f =\delta \hat{f}$.
 \end{definition} 

\begin{prop}{\cite[Proposition 5.4.1]{Nouf}} 
\label{constr_of_D}
The derived module $D_{\theta}$ together with the derivation $G\xrightarrow{\delta} D_{\theta}$, of an ordered morphism
 $G\xrightarrow{\theta} Q$ exist,  and $D_{\theta}$ is unique up to isomorphism. 
\end{prop}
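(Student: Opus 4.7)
Uniqueness of $(D_\theta, \delta)$ up to isomorphism follows from the universal property by the standard argument, so I focus on existence. The approach is to adapt Crowell's construction \cite{Crow} of the derived module of a group homomorphism to the ordered setting: present $D_\theta$ as a free $Q$-module on generators $[g]$ indexed by $g \in G$, modulo the minimal relations forcing $g \mapsto [g]$ to be an order-preserving $\theta$-derivation.

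First I construct a free $Q$-module $F$ with generators $[g]$ for $g \in G$, placing $[g]$ at the base $(g^{-1}g)\theta$. Explicitly, $F_e$ is the free abelian group on pairs $([g],q)$ where $g \in G$ and $q \in Q$ satisfies $q\dom = (g^{-1}g)\theta$ and $q\ran = e$. The action of a morphism $(e,p)$ of $\L(Q)$, where $p\dom \leq e$ and $p\ran = f$, sends $([g],q) \in F_e$ to $([g], q \ast p) \in F_f$, using the pseudoproduct in $Q$. Let $N \subseteq F$ be the $Q$-submodule generated by the following two families:
\begin{enumerate}
\item[(D)] $([gh], q) - ([g], h\theta \cdot q) - ([h], q)$ for each composable pair $gh$ in $G$ and each $q$ with $q\dom = ((gh)^{-1}(gh))\theta$;
\item[(O)] $([g'], q) - ([g], q)$ for each $g' \leq g$ in $G$ and each $q$ with $q\dom = (g'^{-1}g')\theta$.
\end{enumerate}
Set $D_\theta = F/N$ and $\delta(g) = ([g], (g^{-1}g)\theta) + N$. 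Relations (D) and (O) ensure directly that $\delta$ is an order-preserving $\theta$-derivation.

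For the universal property, given an ordered $\theta$-derivation $f \colon G \to \A$, define $\hat f_0 \colon F \to \A$ by $([g], q) \mapsto (gf) \lhd q$, which is a $Q$-map by construction. The derivation axiom for $f$ annihilates the generators of type (D), and the order-preservation of $f$ annihilates those of type (O); so $\hat f_0$ descends to a $Q$-map $\hat f \colon D_\theta \to \A$ satisfying $\delta \hat f = f$. Uniqueness of $\hat f$ is forced because the elements $\delta(g)$ generate $D_\theta$ as a $Q$-module.

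The main obstacle will be keeping the ordered structure coherent with the derivation and module actions throughout. Using OG2 and the fact that $\theta$ is an ordered functor, one must verify that (D) and (O) interact consistently: in particular, whenever $g_1 \leq g_2$ and $h_1 \leq h_2$ with $g_1 h_1$ and $g_2 h_2$ both defined in $G$, the two routes to reduce $[g_1 h_1]$ --- first decompose via (D) then apply (O), or first apply (O) then decompose via (D) --- must produce the same element of $D_\theta$. This reduces to identities in $Q$ involving restrictions, corestrictions, and the pseudoproduct supplied by the ordered groupoid axioms. A parallel coherence check is needed to confirm $\hat f_0$ vanishes on (O), where order-preservation of $f$ must be translated through $\theta$ into the restriction action on $\A$.
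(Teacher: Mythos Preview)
Your construction has a genuine gap at the very first step: the ``free $Q$--module'' $F$ you describe is not a $Q$--module at all, because it is not closed under the $\L(Q)$--action you write down.  You require a generator $([g],q)$ to satisfy the \emph{equality} $q\dom = (g^{-1}g)\theta$, and you let a morphism $(e,p)$ of $\L(Q)$ act by $([g],q) \mapsto ([g], q \ast p)$.  But a general morphism of $\L(Q)$ has $p\dom \leq e$ strictly, and then $(q \ast p)\dom = (q|p\dom)\dom$ is strictly below $(g^{-1}g)\theta$, so $([g],q\ast p)$ fails your domain condition and is not an element of $F$.  The same defect makes your relations of type~(O) ill-posed: in $([g'],q)-([g],q)$ with $g' < g$ and $q\dom=(g'^{-1}g')\theta$, the second term $([g],q)$ violates your own requirement $q\dom = (g^{-1}g)\theta$.

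The fix is exactly what the paper does: relax the domain condition to the \emph{inequality} $(g^{-1}g)\theta \geq qq^{-1}$.  Then the $\L(Q)$--action is well-defined on the nose.  Moreover, once you allow this, you no longer need a separate family~(O) of order relations: the paper imposes only a single family of relations, but phrased with the pseudoproduct $g \ast h$ (for $g^{-1}g \geq hh^{-1}$) rather than the bare composition $gh$.  Taking $h$ to be an identity $e \leq g^{-1}g$ in that relation yields $\langle (g|e),q\rangle = \langle g,q\rangle + \langle e,q\rangle$, and taking $g=h=e$ gives $\langle e,q\rangle = 0$, so $\langle (g|e),q\rangle = \langle g,q\rangle$; this is precisely the order-preservation of $\delta$.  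Your anticipated ``coherence of (D) and (O)'' thus dissolves into the single pseudoproduct relation, and the verification that $\hat f_0$ kills the relations reduces to the derivation identity for $f$ applied to $g \ast h$.
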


\begin{proof}
We sketch the construction.  For $e \in H_0$ we define
\[ X_e = \{ (g,q) : g \in G, q \in Q, (g^{-1}g)\theta \geq qq^{-1}, q^{-1}q=e \} \]
and let $F^{ab}(X_e)$ be the free abelian group on $X_e$.  Then we have a $Q$--module $\F^{ab}$
with $(\F^{ab})_e = F^{ab}(X_e)$ and $\L(H)$--action given by $(g,q) \lhd (e,k) = (g,q \ast k)$.  
Note that we need the pseudoproduct here , with $q \ast k = (q|kk^{-1})k$ in this case. We let
$K_e$ be the subgroup of $F^{ab}(X_e)$ generated by all elements
$(g \ast h,q) - (g, h \theta \ast q) - (h,q)$,
where $g^{-1}g \geq hh^{-1}$ in $G_0$, $(h^{-1}h)\theta \geq qq^{-1}$ and $q^{-1}q=e$.  We set
$(D_{\theta})_e = F^{ab}(X_e)/K_e$, and write $\< g,q \>$ for the image in $(D_{\theta})_e$ of $(g,q) \in X_e$.
Now
\[ [(g \ast h,q) - (g, h \theta \ast q) - (h,q)] \lhd (e,k) = (g \ast h,q \ast k) - (g, h \theta \ast q \ast k) - (h,q \ast k) \in K_{k^{-1}k} \]
and so $D_{\theta}$ is a $Q$--module.  The universal derivation $\delta : G \ra D_{\theta}$ is given by
$g \delta = \< g, (g^{-1}g)\theta \>$.
\end{proof}

\begin{example}
\label{der_of_id} 
\leavevmode
\begin{enumerate}[(a)]
\item (\cite[Proposition 5.4.2]{Nouf})
For any ordered groupoid $G$,  the augmentation ideal $KG$ is the derived module of the identity 
map $G\ra G$. 
\item If $N$ is a union of groups, $N = \bigsqcup_{e \in N_0} N_e$, then we have a morphism $\varep: N \ra N_0$ that maps $n \in N_e$ to $e \in N_0$.  The derived
module of $\varep$ is then the abelianisation $N^{ab} = \bigsqcup_{e \in N_0} N_e^{ab}$.
\end{enumerate}
\end{example} 

Let $\mathcal{OG}^2$ be the category of ordered functors between ordered groupoids: its objects 
are ordered functors and its morphisms are commutative squares. The construction of the derived 
module then defines a functor $D$ from $\mathcal{OG}^2$ to the category $\Mod{}$ of modules over ordered groupoids defined by 
\[ D(G\xrightarrow{\theta} Q)= D_{\theta} \in \Mod{Q}\,. \] 
The well-known correspondence between derivations and maps to semidirect products for groups can be phrased for ordered groupoids as follows.

\begin{prop}{\rm \cite[Proposition 5.5.1]{Nouf}} \label{chi} The functor $D$ has a right adjoint 
$\mathcal{X}:\Mod{}\ra \mathcal{OG}^2$ defined, for $\M \in \Mod{G}$, by $\M \mapsto (G\ltimes \M \xrightarrow{p} G)$ 
 \end{prop}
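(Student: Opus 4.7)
The plan is to show that a morphism in $\mathcal{OG}^2$ from $(H \xrightarrow{\theta} Q)$ to $\mathcal{X}(\M)=(G\ltimes \M \xrightarrow{p} G)$ carries exactly the same data as a morphism from $D(H\xrightarrow{\theta}Q) = (Q, D_{\theta})$ to $(G, \M)$ in the module category, and that the correspondence is natural. Fix $\M \in \Mod{G}$. A morphism from $(H \xrightarrow{\theta} Q)$ to $(G\ltimes \M \xrightarrow{p} G)$ in $\mathcal{OG}^2$ is a commuting square of ordered functors with top arrow $\Phi : H \to G\ltimes \M$ and bottom arrow $\phi : Q \to G$. Writing $\Phi(h) = (\Phi_1(h), \Phi_2(h))$ with $\Phi_2(h) \in \M_{\Phi_1(h)\ran}$, the commutativity of the square forces $\Phi_1 = \theta\phi$; functoriality of $\Phi$ combined with the composition rule $(g,a)(h,b)=(gh,(a\lhd h)+b)$ in $G\ltimes \M$ then forces
\[ \Phi_2(h_1 h_2) = (\Phi_2(h_1) \lhd h_2\theta\phi) + \Phi_2(h_2) \]
whenever $h_1 h_2$ is defined in $H$. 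Since the order on $G\ltimes\M$ is componentwise and $\Phi_1 = \theta\phi$ is automatically order preserving, order preservation of $\Phi$ is equivalent to that of $\Phi_2$. In short, such squares are in bijection with pairs $(\phi, f)$, where $\phi : Q \to G$ is an ordered functor and $f : H \to \M$ is an ordered $(\theta\phi)$-derivation.

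Next, pull $\M$ back along $\phi$ to obtain a $Q$-module $\phi^{*}\M$; then a $(\theta\phi)$-derivation $f : H \to \M$ is the same thing as a $\theta$-derivation $f : H \to \phi^{*}\M$. The universal property of the derived module (Proposition \ref{constr_of_D}) now supplies a bijective correspondence between such $\theta$-derivations and $Q$-maps $\widehat f : D_{\theta} \to \phi^{*}\M$. The pair $(\phi, \widehat f)$ is precisely a morphism in the module category from $(Q, D_{\theta}) = D(H \xrightarrow{\theta} Q)$ to $(G, \M)$, and composing the three bijections yields the sought adjunction isomorphism
\[ \mathcal{OG}^2\bigl((H \xrightarrow{\theta} Q),\, \mathcal{X}(\M)\bigr) \;\cong\; \Mod{}\bigl(D(H \xrightarrow{\theta} Q),\, \M\bigr). \]
Naturality in $(H \xrightarrow{\theta} Q)$ and in $\M$ then follows directly from the universal property of $D_{\theta}$ and from the functoriality of the semidirect product construction in $\M$.

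The principal subtlety is the first step: matching ordered functors to the semidirect product with ordered derivations. This is the ordered groupoid analogue of the classical identification for group extensions, and the only additional checks beyond the purely algebraic decomposition are that order preservation of $\Phi$ corresponds to order preservation of its second component (immediate from the componentwise order on $G\ltimes\M$ together with the fact that $\Phi_1 = \theta\phi$ is already ordered) and that restrictions in $H$ are transported correctly, which again reduces to the componentwise order. With those observations, everything else is a routine unwinding.
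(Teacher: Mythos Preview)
The paper does not give its own proof of this proposition: it is stated with a citation to \cite[Proposition 5.5.1]{Nouf} and is followed immediately by Lemma~\ref{KQ}. The sentence preceding the proposition, ``The well-known correspondence between derivations and maps to semidirect products for groups can be phrased for ordered groupoids as follows,'' indicates the intended mechanism, and your argument implements precisely that mechanism. Your decomposition of a square $(H \xrightarrow{\theta} Q) \to (G \ltimes \M \xrightarrow{p} G)$ into a pair $(\phi,f)$ with $f$ a $(\theta\phi)$--derivation, the identification of such $f$ with $\theta$--derivations into the pulled-back module $\phi^{*}\M$, and the appeal to the universal property of $D_{\theta}$ are all correct and constitute the standard proof. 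The order-theoretic checks you flag (componentwise order on $G \ltimes \M$, compatibility with restrictions) are indeed the only points where the ordered setting differs from the group or groupoid case, and they go through as you say. In short, your argument is correct and is exactly the proof one would expect; there is simply nothing in the paper to compare it against.
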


\begin{lemma}
\label{KQ}
Let $N \xrightarrow{\iota} G \xrightarrow{\ph} Q$ be an extension of ordered groupoids.  Then there is a short exact sequence
of $Q$--modules $0 \ra N^{ab} \xra{\bar{\iota}} D_{\ph} \xra{\bar{\ph}} KQ \ra 0$ in which
$\bar{\iota} : \bar{n} \mapsto \< n, (n^{-1}n) \iota \ph \>$ and 
$\bar{\ph} : \< g,q \> \mapsto g \ph \ast q-q$.
\end{lemma}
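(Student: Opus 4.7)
The plan has six threads: well-definedness of $\bar{\iota}$ and $\bar{\ph}$ as $Q$-maps, vanishing of the composite, surjectivity of $\bar{\ph}$, exactness at $D_{\ph}$, and injectivity of $\bar{\iota}$. I would first dispatch $\bar{\ph}$ using Proposition \ref{constr_of_D}: the composite of $\ph$ with the universal identity-derivation $Q \to KQ$ from Example \ref{der_of_id}(a) is a $\ph$-derivation $\eta: G \to KQ$ sending $g \mapsto g\ph - (g^{-1}g)\ph$, so the universal property delivers a unique $Q$-map $\bar{\ph}: D_{\ph} \to KQ$ with $\delta \bar{\ph} = \eta$. Since any generator can be written as $\< g, q \> = \< g, (g^{-1}g)\ph\> \lhd ((g^{-1}g)\ph, q)$, applying the $\L(Q)$-action to $\eta(g)$ recovers the stated formula $\< g, q \> \mapsto g\ph \ast q - q$.

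For $\bar{\iota}$: the restriction of the universal derivation $\delta: G \to D_{\ph}$ to $N_e$ is a homomorphism $N_e \to (D_{\ph})_{e\ph}$, because every $n \in N_e$ has $n\ph = e\ph \in Q_0$ and so the action factor in the derivation identity is trivial; since $(D_{\ph})_{e\ph}$ is abelian this factors through $N_e^{ab}$. The $Q$-module structure on $N^{ab}$ is inherited from the conjugation action of $G$ on $N$: this action is well defined modulo commutators because $\ph$ is identity-separating, so any two lifts of $q \in Q$ differ by an element that centralises $N$ after abelianisation. The $Q$-equivariance of $\bar{\iota}$ is then a direct consequence of the derivation identity for $\delta$. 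The composite $\bar{\ph}\bar{\iota}(\bar{n}) = n\ph \ast n\ph - n\ph = 0$ vanishes since $n\ph$ is an identity, and $\bar{\ph}$ is surjective because $\ph$ is surjective on morphisms and $KQ$ is generated by the elements $q - q^{-1}q$.

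For exactness at $D_{\ph}$ I would consider the quotient $D_{\ph} / \im \bar{\iota}$ and the composite $\ph$-derivation $G \xra{\delta} D_{\ph} \to D_{\ph}/\im \bar{\iota}$. It vanishes on $N$ by construction, and since identity-separation of $\ph$ forces any two preimages of $q \in Q$ to differ by an element of $N$, a short derivation calculation shows that this $\ph$-derivation descends to a well-defined derivation $Q \to D_{\ph}/\im \bar{\iota}$. By the universal property of $KQ = D_{\id_Q}$ this factors as a $Q$-map $\chi: KQ \to D_{\ph}/\im \bar{\iota}$, which is inverse to the surjection induced by $\bar{\ph}$; hence $\ker \bar{\ph} = \im \bar{\iota}$.

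The main obstacle will be injectivity of $\bar{\iota}$, for which I would build an explicit retraction. Choose a set-theoretic section $s: Q \to G$ of $\ph$ that preserves identities, and define $\tau: G \to N^{ab}$ by $\tau(g) = \overline{g \cdot s(g\ph)^{-1}}$; this is well defined because $g \cdot s(g\ph)^{-1}$ has $\ph$-image an identity and so lies in $N$. Verifying that $\tau$ is an order-preserving $\ph$-derivation is the delicate part: it is essentially a standard cocycle calculation but requires careful handling of restrictions, pseudoproducts and the comparability conditions underlying OG3--OG4. Once this is in hand, the universal property supplies a $Q$-map $\tau^*: D_{\ph} \to N^{ab}$ with $\delta \tau^* = \tau$, and for $n \in N_e$ we have $s(n\ph) = s(e\ph) = e$, so $\tau^* \bar{\iota}(\bar{n}) = \tau(n) = \overline{n \cdot e^{-1}} = \bar{n}$. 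Hence $\tau^* \bar{\iota} = \id_{N^{ab}}$ and $\bar{\iota}$ is injective.
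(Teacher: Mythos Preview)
Your treatment of $\bar\ph$, $\bar\iota$, the vanishing of the composite, and surjectivity is fine, and your argument for exactness at $D_\ph$ --- descending the composite $\ph$--derivation $G\to D_\ph/\im\bar\iota$ to a derivation on $Q$ and invoking the universal property of $KQ$ --- is correct, although different from the paper's route.  The paper instead observes that the extension can be rewritten as a pushout square in $\mathcal{OG}^2$ and applies the functor $D$, which as a left adjoint (Proposition~\ref{chi}) preserves pushouts; this yields the right-exact sequence $N^{ab}\to D_\ph\to KQ\to 0$ in one stroke.

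The genuine gap is in your injectivity argument.  The map $\tau(g)=\overline{g\cdot s(g\ph)^{-1}}$ lands in $N^{ab}_{gg^{-1}}$ rather than $N^{ab}_{g^{-1}g}$, so it does not even meet condition (1) for a $\ph$--derivation.  More seriously, if you repair this to $\tau(g)=\overline{s(g\ph)^{-1}g}$, the derivation identity still fails: a direct computation gives
\[
\tau(gh)-\bigl(\tau(g)\lhd h\ph+\tau(h)\bigr)
=\overline{\,s((gh)\ph)^{-1}\,s(g\ph)\,s(h\ph)\,}\in N^{ab}_{h^{-1}h},
\]
which is the image of the factor set of the section $s$ and is nonzero in $N^{ab}$ whenever the extension is non-split.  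So no $Q$--map $D_\ph\to N^{ab}$ arises from the universal property in this way, and your retraction $\tau^*$ does not exist.  What the paper does instead is define $\kappa$ directly on the free generators $(g,q)$ of $F^{ab}(X_e)$ by
$(g,q)\kappa=\overline{\,s(g\ph\ast q)^{-1}\ast g\ast s(q)\,}$,
so that $\kappa$ depends on \emph{both} $g$ and $q$.  With this definition the section terms telescope across the defining relation $(g\ast h,q)-(g,h\ph\ast q)-(h,q)$, and one obtains a well-defined \emph{abelian group} homomorphism $\kappa:D_\ph\to N^{ab}$ (not a $Q$--map) with $\bar\iota\kappa=\id$, which suffices for injectivity.
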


\begin{proof} 
The exact sequence
\[
\xymatrix{N \ar[r] \ar[d]_{\iota \ph}  & G \ar[r]^{\ph} \ar[d]^{\ph} & Q  \ar[d]^{=} \\ Q_0 \ar[r] & Q \ar[r]_{=} & Q}
\]
in $\mathcal{OG}^2$ can be rewritten as the pushout
\[ \xymatrix{ N \ar[rr]^{\iota} \ar[d]_{\iota \ph} \ar[ddrr] && G \ar[d]_{\ph} \ar[ddrr] && \\  Q_0 \ar[rr]|\hole \ar[ddrr]  && Q \ar[ddrr]|\hole && \\
& & Q_0 \ar[rr] \ar[d]^{=}&& Q \ar[d]^{=} \\ && Q_0 \ar[rr] && Q  } \] 
By Proposition \ref{chi} the functor $D$ is a left adjoint so preserves colimits.  Applying $D$ to the pushout above,
we deduce that the commutative square
\[
\xymatrix{N^{ab} \ar[r]^{\bar{\iota}} \ar[d] & D_{\varphi} \ar[d]^{\bar{\ph}} \\ 0 \ar[r] & KQ } 
\]
is a pushout in $\Mod{Q}$, and so the sequence 
\[N^{ab} \xra{\bar{\iota}} D_{\varphi} \xra{\bar{\ph}} KQ \ra 0 \]
is exact.  To complete the proof, we construct an abelian group homomorphism $\kappa: D_{\ph} \ra N^{ab}$
such that $\bar{\iota} \kappa = \id$.  We use the isomorphism $Q \cong G \sslash N$ from Lemma \ref{ext_props},
and let $\alpha : N \ra N^{ab}$ be the abelianisation.
We choose a transversal $\tau : Q \ra G$ to the equivalence relation
\[
g \simeq_N h  \iff  \; \text{there exist $a,b \in N$ such that} \; h= agb \,.
\]
defining $G \sslash N$.  Since $\ph$ is identity separating, we can identify $Q_0$ and $G_0$, and take $\tau$ to be the identity 
on $Q_0$.  Then for $(g,q) \in X_e$ (as in the proof of Proposition \ref{constr_of_D}), we set
\[ (g,q) \kappa = [((g \ph \ast q)\tau)^{-1} \ast g \ast q \tau] \alpha \in N^{ab} \,. \]
Then if $g,h \in G$ with $g^{-1}g \geq hh^{-1}$, and $q \in Q$ with $(h^{-1}h)\ph \geq qq^{-1}$ and $q^{-1}q=e$ we
apply $\kappa$ to the terms in a defining relation fo $D_{\ph}$ to find:
\begin{align}
\label{defrel1} (g \ast h,q) & \overset{\kappa}{\mapsto} [((g \ph \ast h \ph \ast q)\tau)^{-1} \ast g \ast h \ast q \tau] \alpha \,, \\
\label{defrel2} (g, h\ph \ast q) &\overset{\kappa}{\mapsto} [((g \ph \ast h \ph \ast q)\tau)^{-1} \ast g \ast(h \ph \ast q) \tau] \alpha \,,\\
\label{defrel3} (h,q) & \overset{\kappa}{\mapsto} [((h\ph \ast q)\tau)^{-1} \ast h \ast q \tau] \alpha \,.
\end{align}
Comparing \eqref{defrel1} with \eqref{defrel2} and \eqref{defrel3} we see that $\kappa$ induces an abelian group
homomorphism $\kappa : D_{\ph} \ra N^{ab}$.  Moreover, for $n \in N$, since $n \ph = (nn^{-1})\ph$ we have
\begin{align*}
(n \alpha) \bar{\iota} \kappa &= \< n,(n^{-1}n)\ph \> \kappa = [(n \ph \tau)^{-1} \ast n \ast (n^{-1}n)\ph \tau] \alpha \\
&= [((nn^{-1}) \ph \tau)^{-1}  \ast n \ast (n^{-1}n)\ph \tau] \alpha \\
&= [ nn^{-1}nn^{-1}n]\alpha = n \alpha \,.
\end{align*}
Hence $\bar{\iota}$ is injective,and this completes the proof of the Lemma.
 \end{proof}

\begin{theorem}\label{five} 
Consider an extension 
\[ \mathscr{E} : N \xrightarrow{\mu} G \xrightarrow{\varphi} Q \]
of ordered groupoids, in which $N$ is a union of groups, $\mu$ embeds $N$ as a normal ordered subgroupoid of $G$,
and $Q$ is isomorphic to the quotient $G \sslash N$.
Suppose that $\A$ is a $Q$-module. Then the five-term sequence 
\[ 0\ra \Der(Q,\A)\ra \Der_{\varphi}(G,\A) \ra \Mod{Q}(N^{ab},\A) \ra H^2(Q^I,\A^0) \ra H^2(G^I,\A^0)\] 
is exact. 
\end{theorem}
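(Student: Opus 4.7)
My approach is to derive the five-term sequence from the long exact $\Ext{}$--sequence attached to the short exact sequence of $Q$--modules
\[ 0 \to N^{ab} \to D_{\varphi} \to KQ \to 0 \]
supplied by Lemma \ref{KQ}. Applying $\Mod{Q}(-,\A)$ produces
\[ 0 \to \Mod{Q}(KQ,\A) \to \Mod{Q}(D_{\varphi},\A) \to \Mod{Q}(N^{ab},\A) \to \Ext{Q}^1(KQ,\A) \to \Ext{Q}^1(D_{\varphi},\A) \to \cdots \]
and the task is to identify each of the first five terms with the object appearing in the theorem.

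Four of the identifications are quick. Since $KQ$ is the derived module of $\id_Q$ (Example \ref{der_of_id}(a)), maps $KQ \to \A$ are the same as derivations $Q \to \A$, so $\Mod{Q}(KQ,\A) \cong \Der(Q,\A)$. By the universal property of $D_{\varphi}$ in Proposition \ref{constr_of_D}, $\Mod{Q}(D_{\varphi},\A) \cong \Der_{\varphi}(G,\A)$. The term $\Mod{Q}(N^{ab},\A)$ appears as itself, and Theorem \ref{identitycohomo} gives $\Ext{Q}^1(KQ,\A) \cong H^2(Q^I,\A^0)$.

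The substantial step is to identify the fifth term with $H^2(G^I,\A^0) \cong \Ext{G}^1(KG,\A)$. The key observation is that a $\varphi$--derivation $G \to M$ is literally the same data as an ordinary derivation $G \to \varphi^* M$, where $\varphi^*: \Mod{Q} \to \Mod{G}$ denotes restriction along $\varphi$. Combining this with the universal properties of $D_{\varphi}$ and $KG$ yields a natural isomorphism of functors on $\Mod{Q}$:
\[ \Mod{Q}(D_{\varphi}, -) \;\cong\; \Mod{G}(KG, \varphi^*(-)). \]
Since $\varphi^*$ is exact, the right-derived functors of this composite coincide with $\Ext{G}^n(KG, \varphi^*(-))$, so one obtains $\Ext{Q}^n(D_{\varphi},\A) \cong \Ext{G}^n(KG, \varphi^*\A) = \Ext{G}^n(KG,\A) \cong H^{n+1}(G^I,\A^0)$ for all $n \geq 1$. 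The five-term sequence then appears as a truncation of the long exact $\Ext{}$--sequence.

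The remaining obstacle, and in my view the main one, is to verify that, under the chain of identifications above, the map $\Ext{Q}^1(KQ,\A) \to \Ext{Q}^1(D_{\varphi},\A)$ induced by $\bar{\varphi}: D_{\varphi} \to KQ$ really does agree with the map $H^2(Q^I,\A^0) \to H^2(G^I,\A^0)$ induced functorially by $\varphi^I : G^I \to Q^I$. This reduces to a naturality check: the composite $KG \to \varphi^* D_{\varphi} \xrightarrow{\varphi^* \bar{\varphi}} \varphi^* KQ$, where the first map is the $G$--module map corresponding to the universal derivation $\delta: G \to D_{\varphi}$ under the adjunction above, must coincide with the canonical map $KG \to \varphi^* KQ$ induced by the functor $\varphi$ on augmentation modules. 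Both translate, via the universal property of $KG$, into the same $\varphi$--derivation $G \to KQ$ sending $g \mapsto g\varphi - (g^{-1}g)\varphi$, and so they agree.
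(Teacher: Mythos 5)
Your skeleton is the same as the paper's: apply $\Ext{Q}(-,\A)$ to the short exact sequence $0 \ra N^{ab} \ra D_{\varphi} \ra KQ \ra 0$ of Lemma \ref{KQ} and identify the first four terms exactly as you do. The gap is in your treatment of the fifth term. Your assertion that, because $\varphi^*$ is exact, the right derived functors of $\Mod{G}(KG,\varphi^*(-))$ on $\Mod{Q}$ coincide with $\Ext{G}^n(KG,\varphi^*(-))$ is unjustified: computing them means restricting an injective resolution in $\Mod{Q}$ along $\varphi$, and this only computes $\Ext{G}^n(KG,-)$ if $\varphi^*$ sends injective $Q$--modules to modules acyclic for $\Mod{G}(KG,-)$, which fails in general. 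In fact the isomorphism $\Ext{Q}^1(D_{\varphi},\A)\cong\Ext{G}^1(KG,\A)$ you rely on is false, already for groups (a special case of the present setting): take $Q$ trivial, $N=G=(\Z/p)^2$, $\A=\Z/p$. Then $KQ=0$, so $D_{\varphi}\cong N^{ab}=(\Z/p)^2$ and $\Ext{Q}^1(D_{\varphi},\A)\cong\Ext{\Z}^1((\Z/p)^2,\Z/p)$ has order $p^2$, whereas $\Ext{G}^1(KG,\A)\cong H^2(G^I,\A^0)\cong H^2(G,\Z/p)$ has order $p^3$. So the five-term sequence is not simply a truncation of the long exact $\Ext{}$--sequence with the last term re-identified.

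What exactness at $H^2(Q^I,\A^0)$ actually requires, and what the paper proves, is only that the map $\Ext{Q}^1(D_{\varphi},\A)\ra\Ext{G}^1(KG,\A)\cong H^2(G^I,\A^0)$ induced by $KG\ra D_{\varphi}$ is \emph{injective}: then the kernel of the composite $H^2(Q^I,\A^0)\ra H^2(G^I,\A^0)$ equals the kernel of $\Ext{Q}^1(KQ,\A)\ra\Ext{Q}^1(D_{\varphi},\A)$, which the long exact sequence identifies with the image of $\Mod{Q}(N^{ab},\A)$. The paper obtains this injectivity by embedding $\A$ in an injective $Q$--module $\mathcal I$ with quotient $\mathcal C$, comparing the exact sequences for $\Ext{Q}^*(D_{\varphi},-)$ and $\Ext{G}^*(KG,-)$, using precisely your observation (that $\varphi$--derivations into a $Q$--module are derivations into its restriction, so the Hom-level vertical maps are isomorphisms) together with $\Ext{Q}^1(D_{\varphi},\mathcal I)=0$, and then chasing the diagram. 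Your final naturality paragraph is fine in spirit, but it cannot repair the false isomorphism; you need to replace the identification of the fifth term by this (or an equivalent) injectivity argument.
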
 

\begin{proof} By Lemma \ref{KQ} we obtain the short exact sequence 
 \[ 0 \ra  N^{ab} \ra D_{\varphi} \ra KQ \ra 0 \] 
of $Q$--modules, to which we apply the apply the  functor, $\Ext{Q}(-,\A)$.  We obtain the exact sequence
\[
 0\ra \Mod{Q}(KQ,\A) \ra \Mod{Q}(D_{\varphi},\A) \ra \Mod{Q}(N^{ab},\A) \ra \Ext{Q}^1(KQ,\A) \ra \Ext{Q}^1 (D_{\varphi},\A) 
\] in low dimensions. We make the following identifications to arrive at the desired result.

By Example \ref{der_of_id}(a), the augmentation module $KQ$ is the derived module of the identity map on $Q$, and
its universal property then gives an isomorphism $\Mod{Q}(KQ,\A) \cong \Der_{}(Q,\A)$. 
Similarly $\Mod{Q} (D_{\varphi}, \A) \cong \Der_{\varphi}(G,\A)$.
By Theorem \ref{identitycohomo} we have an isomorphism $\Ext{Q}^1(KQ,\A)  \cong H^2(Q^I,\A^0)$.
The commutative square
\[
\xymatrix{G \ar[r]^{=} \ar[d]_{=} & G\ar[d]^{\varphi} \\ G \ar[r]_{\varphi} & Q}
\]
induces a map $\beta : KG \ra D_{\phi}$, and so also a morphism $\Ext{Q}^i(D_{\varphi},\A) \ra \Ext{G}^i(KG,\A)$. We show that
this map is injective for $i=1$.

We proceed by embedding  $\A$ into some injective $Q$-module $\mathcal{I}$ and set $\mathcal{C}$ to be the quotient
 module. Applying $\Ext{G}^*(KG,-)$ and $\Ext{Q}^*(D_{\varphi},-)$ to the exact sequence $\A \ra \mathcal{I} \ra
 \mathcal{C}$ gives the commutative diagram 
\[
\begin{tikzcd}[column sep=0.65em] \Mod{G}(KG,\A) \rar & \Mod{G}(KG,\mathcal{I})\rar  & \Mod{G}(KG,\mathcal{C}) \rar &\Ext{G}^1(KG,\A)\rar &\Ext{G}^1(KG,\mathcal{I}) \\ \Mod{Q}(D_{\varphi},\A)\ar{u} \rar &\Mod{Q}(D_{\varphi},\mathcal{I}) \rar \ar{u} &\Mod{Q}(D_{\varphi},\mathcal{C})\rar \ar{u} & \Ext{Q}^1(D_{\varphi},\A)\rar \ar{u}{\tau} & 0 \ar{u} \end{tikzcd} 
\]
where the vertical maps are all induced by $\beta$, and we get $0$ at the lower right since ${\mathcal I}$ is injective. 
Now any $Q$--module $\B$ is a $G$--module via $\varphi$, 
and a $Q$--map $D_{\varphi} \ra \B$ exactly corresponds to a $\varphi$--derivation $G \ra \B$, which is a derivation
$G \ra \B$ considering $\B$ as a $G$--module, and so corresponds to a $G$--map $KG \ra \B$.  Hence the first three vertical 
maps in the above diagram are equalities, and a simple diagram chase using exactness of the top line shows that 
$\Ext{Q}^1(D_{\varphi},\A) \xrightarrow{\tau} \Ext{G}^1(KG,\A)$ is injective. Making the identification 
$H^2(G^I,\A^0) \cong \Ext{G}^1(KG,\A)$ given by  Theorem \ref{identitycohomo}, we obtain an embedding
 $\Ext{Q}^1(D_{\varphi}, \A) \emb H^2(G^I,\A^0)$ and the sequence given in the Theorem now follows.
\end{proof}

\section{Classification of extensions}
\label{classify}
 Let $\mathbb{F}(Q)$ be the free ordered groupoid on the underlying graph of the ordered groupoid $Q$. Denote the element of
 $\mathbb{F}(Q)$ that corresponds to an arrow $q\in Q$ by $\lfloor q \rfloor$, and define $\mathbb{F}(Q) \xrightarrow{\pi} Q$ by
 $\lfloor q\rfloor \mapsto q$. We denote its kernel by $\mathbb{N}(Q)$. If $w=\lfloor q_1 \rfloor \lfloor q_2 \rfloor \cdots \lfloor q_m
 \rfloor \in \mathbb{N}(Q)$ then $q_1\cdots q_m \in Q_0$ and so $q_1\textbf{d}=q_m\textbf{r}$ and so
 $w\textbf{d}=w\textbf{r}$. Therefore $\mathbb{N}(Q)$ is a union of groups. 

If $\A$ is a $Q$--module (which we shall write additively) then it is also an $\mathbb{F}(Q)$--module via $\pi$, and so we can
 construct the semidirect product $S=\mathbb{F}(Q) \ltimes \A$. Then $T=\mathbb{N}(Q)\ltimes \A$ is a normal ordered
 subgroupoid of $S$, and since $\mathbb{N}(Q)$ acts trivially, $T$ is just the pullback $T=\{(w,a)\in \mathbb{N}(Q)\times \A :
 w\textbf{r}=a\textbf{d} \}$ with componentwise composition. 

Now $\mathbb{F}(Q)$ acts by conjugation on $\mathbb{N}(Q)$ and, as above, on $\A$  via $\pi$. Hence we can consider the 
set of ordered functors  $\homogpd_{\mathbb{F}(Q)} (\mathbb{N}(Q), \A)$ that respects the $\mathbb{F}(Q)$--actions. This is
 an abelian group under pointwise addition in $\A$.  Given $\phi \in \homogpd_{\mathbb{F}(Q)} (\mathbb{N}(Q), \A)$, we define
 $ M_{\phi}=\{ (w,w\phi): w\in \mathbb{N}(Q)\} \subseteq \mathbb{N}\ltimes \A$. 

\begin{lemma}\label{exten} $M_{\phi}$ is a normal ordered subgroupoid of $S$ and there is an extension 
\[ \mathscr{E}_{\phi}:\A \xrightarrow{\iota} S\sslash M_{\phi} \xrightarrow{\pi_{\phi}} Q \] 
of ordered groupoids. 
\end{lemma}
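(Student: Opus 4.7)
The plan is to establish that $M_\phi$ is a normal ordered subgroupoid of $S = \mathbb{F}(Q) \ltimes \A$, and then to realise the extension $\mathscr{E}_\phi$ via the quotient $S \sslash M_\phi$ construction from section \ref{ord_gpd}. Axiom N01 is immediate, since as an ordered functor $\phi$ sends each identity loop $e \in \mathbb{N}(Q)$ to $0_e \in A_e$, so $(e, 0_e) \in M_\phi$ for every $e \in Q_0$.  For N02 I shall use that ordered functors preserve restrictions (a consequence of the uniqueness clause in OG3): given $w \in \mathbb{N}(Q)$ and $e \leq w\mathbf{d}$, both $(e|w)\phi$ and the restriction of $w\phi$ to $e$ in $\A$ are the unique element of $\A_e$ lying below $w\phi$, so they coincide, and this identifies the restriction of $(w, w\phi)$ in $S$ as an element of the form $(w', w'\phi) \in M_\phi$.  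Closure of $M_\phi$ under composition and inversion then follows from $\phi$ being a groupoid morphism together with the trivial $\mathbb{N}(Q)$-action on $\A$.

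The principal obstacle is axiom N03, and the simplification that keeps the verification clean is the following.  If $h, k \leq g$ in $S$ and $h^{-1}(w, w\phi)k$ is defined for $w \in \mathbb{N}(Q)$, then $w\mathbf{d} = w\mathbf{r}$.  Writing $g = (g_1, g_2)$, $h = (h_1, h_2)$ and $k = (k_1, k_2)$, the matching-of-ends condition gives $h_1\mathbf{d} = w\mathbf{d} = k_1\mathbf{d}$, and since $h_1, k_1 \leq g_1$ share the same domain, OG3 forces $h_1 = k_1$; an analogous uniqueness argument applied to $h_2, k_2 \leq g_2$ in the $\A$-coordinate forces $h_2 = k_2$, so in fact $h = k$.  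Axiom N03 therefore collapses to checking that conjugation by a single element $(g, a) \in S$ stabilises $M_\phi$.  A direct computation in the semidirect product, simplified by the trivial $\mathbb{N}(Q)$-action on $\A$ and the abelianness of $\A$ (which cancels the $\pm a$ contributions), yields $(g, a)^{-1}(w, w\phi)(g, a) = (g^{-1}wg, (w\phi)\lhd g)$, and the assumed $\mathbb{F}(Q)$-equivariance of $\phi$ then gives $(w\phi)\lhd g = (g^{-1}wg)\phi$, placing the conjugate in $M_\phi$.

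With $M_\phi$ normal, the extension maps are built as follows.  The functor $\pi_\phi : S \sslash M_\phi \to Q$ is induced by $[(g, a)] \mapsto g\pi$; this is well-defined because $M_\phi$ is contained in $T = \mathbb{N}(Q)\ltimes \A$, whose first-coordinate image $\mathbb{N}(Q)$ is the kernel of $\pi$.  Surjectivity and order-preservation are clear, and identity-separation follows from the simplified form of $\simeq_{M_\phi}$ valid when $M_\phi$ is a union of groups: a relation between distinct objects would require an arrow of $M_\phi \subseteq T$ between them, which is impossible since $T$ is a disjoint union of groups.  For the embedding $\iota$, the group homomorphisms $T_e \to A_e$, $(w, a) \mapsto a - w\phi$, have kernel exactly $(M_\phi)_e$, and assembling them produces an isomorphism $T \sslash M_\phi \to \A$; its inverse $a \mapsto [(e, a)]$ is the required ordered embedding $\iota : \A \to S \sslash M_\phi$, and the identity $[(w, a)] = \iota(a - w\phi)$ valid for every $w \in \mathbb{N}(Q)$ identifies the image of $\iota$ with the kernel of $\pi_\phi$, completing the construction of $\mathscr{E}_\phi$.
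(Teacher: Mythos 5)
Your proposal is correct and takes essentially the same approach as the paper: N01--N03 are verified in the same way (in particular the key reduction $h=k$ via uniqueness of restrictions below a common upper bound, followed by the conjugation computation using the trivial $\mathbb{N}(Q)$--action and the $\mathbb{F}(Q)$--equivariance of $\phi$), and the maps $\pi_{\phi}$ and $\iota$ are constructed identically. The only minor variation is that you get injectivity of $\iota$ structurally, from the homomorphisms $T_e \ra A_e$, $(w,a)\mapsto a-w\phi$, with kernel $(M_{\phi})_e$, where the paper instead does a direct element computation from $[(e,a)]=[(f,b)]$.
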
 

\begin{proof} To show that $M_{\phi}$ is a normal ordered subgroupoid of $S$, we verify that $M_{\phi}$ satisfies the axioms
 NO1--NO3. The first axiom NO1 is clear.

Suppose $w\in \mathbb{N}(Q)$ and  $e\in S_0$ such that $e\leq w\textbf{d}$. Then we define the restriction of $(w,w\phi)$ to
 $e$ by $(e|(w,w\phi))=((e|w),(e|w\phi))$. However $(e|w)\phi= (e \phi | w\phi)=(e|w\phi)$ since 
$((e|w)\phi)\textbf{d}=e=e\phi$ and $(e|w)\phi \leq w\phi$. Therefore 
\[ (e|(w,w\phi))= ((e|w),(e|w)\phi) \in M_{\phi} \] 
and NO2 holds

Let $(w,w\phi)\in M_{\phi}$ and $(h,a), (k,b)\in S$ such that $(h,a)$ and $(k,b)$ have an upper bound $(g, c)\in S$ and  let
 $(h,a)^{-1}(w,w\phi)(k,b)$ be defined in  $S$. The subgroupoid $M_{\phi}$ is a disjoint union of groups, and so $(h,a)^{-1
}(w,w\phi)(k,b)$ being defined implies that$(h,a)\textbf{d}=(k,b)\textbf{d}$: since $(h,a)$ and $(k,b)$ are restrictions of $(g,c)$
 in $S$ then $(h,a)=(k,b)$. Thus  
\begin{eqnarray*}
(h,a)^{-1}(w,w\phi)(k,b)&=& (h,a)^{-1}(w,w\phi)(h,a) \\
&=& (h^{-1},-a \lhd h^{-1})(w,w\phi)(h,a)\\
&=& (h^{-1},-a \lhd h^{-1})(wh,(w\phi \lhd h)+a)\\
&=&(h^{-1}wh,(-a \lhd h^{-1}wh)+(w\phi \lhd h)+a).
\end{eqnarray*} 
But $-a\lhd h^{-1}wh=-a$ since $h^{-1}wh \in \mathbb{N}(Q)$ acts trivially and so the second component is 
$w\phi \lhd h$, which is equal to $(h^{-1}wh)\phi$ since $\phi \in \homogpd_{\mathbb{F}(Q)} (\mathbb{N}(Q), \A)$.  Therefore 
 \[ (h,a)^{-1}(w,w\phi)(k,b)= (h^{-1}wh, (h^{-1}wh)\phi)\in M_{\phi} \] and NO3 holds. Therefore $M_{\phi}$  is a normal
 ordered subgroupoid of $S$.  

We now show that $\mathscr{E}_{\phi}$ is an extension of ordered groupoids. Since $M_{\phi}$ is a normal ordered
 subgroupoid  of $S$, the quotient ordered groupoid $S\sslash M_{\phi}$ exists. Define the map $S\sslash M_{\phi}
 \xrightarrow{\pi_{\phi}} Q$ by $[(x ,a)]\mapsto x \pi$.  We show that $\pi_{\phi}$ is well defined. Let $(w,a)\in S$ and suppose
 that $u,v \in \mathbb{N}(Q)$ and that $(u,u\phi)(w,a)(v,v\phi)$ is defined in $S$. Then we have
\[ (u,u\phi)(w,a)(v,v\phi) = (u,u\phi)(wv,a\lhd v + v\phi)
= (uwv,u\phi \lhd wv +(a \lhd v) +v\phi)
\]
and so applying $\pi_{\phi}$ gives 
\[ [(u,u\phi)(w,a)(v,v\phi)] \mapsto (uwv) \pi = w\pi=[(w,a)]\pi_{\phi} \,. \] 
So $\pi_{\phi}$ is well-defined, and $[w,a]_{\pi_{\phi}} \in E(Q) \Leftrightarrow w\in \mathbb{N}(Q)$. It is evident that
 $\pi_{\phi}$ is an ordered functor of ordered groupoids.

Now define the map $\A \xrightarrow{\iota} S\sslash M_{\phi}$  by $a \mapsto [(e,a)]$ for $a\in \A_e$.  To show injectivity of
 $\iota$, let $[(f,b)] \in S\sslash M_{\phi}$ with $b\in A_f$, and suppose that $[(e,a)]= [(f,b)]$.  Then
for some $u,v\in \mathbb{N}(Q)$ we have
\[ (u,u\phi)(e,a)(v,v\phi) = (uev,(u\phi ~ \lhd~ e~\lhd~ v)+ a~ \lhd ~ v+ v\phi) = (f,b) \,. \]
From the first component, we have $uev=f$ and so $e=f$, and $u=v^{-1}$. In the second component, $v$ acts
trivially on $\A$ and so
\[
(u\phi ~ \lhd~ e~\lhd~ v)+ a~ \lhd ~ v+ v\phi = u\phi + a + v\phi
= u\phi + a -  u\phi = a \,.\]
Hence $a=b$ and therefore $\iota$ is injective. 

We have that \begin{eqnarray*}
\ker(\pi_{\phi}) &= & \{ (w,a): w\in \mathbb{N}(Q) \} \\ &=&\{ [(w,w\phi)(w\textbf{r},(w\phi)^{-1}a) ] \} \\ &=& \{ [(w\textbf{r}
,(w\phi)^{-1}a)]\} \\ & \subseteq & \im (\iota)
\end{eqnarray*}  
Therefore $\mathscr{E}_{\phi}$ is an extension as desired.
\end{proof}

We denote $S\sslash M_{\phi}$ by $G_{\phi}$, and we denote the image in $G_{\phi}$ of an element $(w,a)\in S$ by 
$\left[ w,a\right]_{\phi}$. Note that $\left[w,a\right]_{\phi}\pi_{\phi}=w\pi$. For the trivial homomorphism $\tau \in
 \homogpd_{\mathbb{F}(Q)} (\mathbb{N}(Q), \A)$, we have $G_{\tau}= Q\ltimes \A$. 

Let $\mathscr{E}$ be the set of extensions constructed from the normal ordered subgroupoids $M_{\phi}$ of $S$ for $\phi \in
 \homogpd_{\mathbb{F}(Q)} (\mathbb{N}(Q), \A)$ occurring in Lemma \ref{exten}. Then the (abelian) group
 $\homogpd_{\mathbb{F}(Q)} (\mathbb{N}(Q), \A)$ acts transitively on $\mathscr{E}$: for $\alpha \in \homogpd_{\mathbb{F
}(Q)} (\mathbb{N}(Q), \A)$ we define $\mathscr{E}_{\phi} \lhd \alpha =\mathscr{E}_{\phi + \alpha}$. 

Since $\mathbb{N}(Q)$ acts trivially on $\A$, when restricted to $\mathbb{N}(Q)$ any derivation 
$\delta :\mathbb{F}(Q)\ra \A$ restricts to a morphism in $\homogpd_{\mathbb{F}(Q)} (\mathbb{N}(Q), \A)$. This gives us the
 mapping \[ \rho : \Der (\mathbb{F}(Q), \A) \ra \homogpd_{\mathbb{F}(Q)} (\mathbb{N}(Q), \A) \] in the five-term exact sequence
\[ 0\ra \Der_{}(Q,\A)\ra \Der_{\pi}(\mathbb{F}(Q),\A) \ra \Mod{Q}(\mathbb{N}^{ab}(Q),\A) \ra H^2(Q^I,\A^0) \xrightarrow{\varpi} H^2(\mathbb{F}(Q^I),\A^0) \] using the fact that 
$\Mod{Q}(\mathbb{N}^{ab}(Q),\A)= \homogpd_{\mathbb{F}(Q)} (\mathbb{N}(Q), \A)$. 

\begin{prop} 
The extensions $\mathscr{E}_{\phi}$ and $\mathscr{E}_{\psi}$ are equivalent if and only if $-\phi +\psi:\mathbb{N}(Q)\ra \A$ is the restriction of an ordered derivation $\mathbb{F}(Q)\ra \A$. 
\end{prop}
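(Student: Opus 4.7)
Both directions are driven by the ordered functor $\lambda_\phi : \mathbb{F}(Q) \ra G_\phi$, $w \mapsto [w,0]_\phi$ (routinely a functor, since $0 \lhd w_2 + 0 = 0$), which satisfies $\lambda_\phi \pi_\phi = \pi$; analogously we have $\lambda_\psi$.

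For ``$\Leftarrow$'', suppose $\delta \in \Der(\mathbb{F}(Q),\A)$ satisfies $\delta|_{\mathbb{N}(Q)} = -\phi + \psi$. Define $\mu : G_\phi \ra G_\psi$ on representatives by $[w,a]_\phi \mapsto [w, a + w\delta]_\psi$. The main point is well-definedness: the $M_\phi$-translate of $(w,a)$ by $(u,u\phi)$ and $(v,v\phi)$ (for $u,v \in \mathbb{N}(Q)$) is $(uwv,\, u\phi \lhd wv + a \lhd v + v\phi)$. Expanding $(uwv)\delta$ via the derivation identity, using the trivial $\mathbb{N}(Q)$-action on $\A$ (so $u\delta \lhd wv = u\delta$ and $w\delta \lhd v = w\delta$), and substituting $u\delta = -u\phi + u\psi$ and $v\delta = -v\phi + v\psi$, the second coordinate of the image rewrites as $u\psi \lhd wv + (a + w\delta)\lhd v + v\psi$: precisely the $M_\psi$-translate of $(w, a + w\delta)$ by $(u,u\psi), (v,v\psi)$. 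Functoriality of $\mu$ reduces to $(w_1 w_2)\delta = w_1\delta \lhd w_2 + w_2 \delta$, order-preservation is inherited from $\delta$, $\mu$ fixes $\A$ because $e\delta = 0$ for every identity $e$ (as $e\delta = e\delta + e\delta$), and $\pi_\psi \mu = \pi_\phi$ is clear on representatives.

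For ``$\Rightarrow$'', let $\mu$ be an equivalence. For each $w \in \mathbb{F}(Q)$, the arrows $\lambda_\psi(w)$ and $\mu\lambda_\phi(w)$ of $G_\psi$ share domain $w\dom$ and range $w\ran$ (since $\mu$ is the identity on objects) and have common $\pi_\psi$-image $w\pi$; hence $\lambda_\psi(w)^{-1}\cdot\mu\lambda_\phi(w)$ lies in the local group at $w\ran$ and projects to an identity in $Q$, so by the extension property equals $\iota(w\delta)$ for a unique $w\delta \in \A_{w\ran}$. Order-preservation of $\delta$ is inherited from $\mu, \lambda_\phi, \lambda_\psi$; applying $\mu$ to $\lambda_\phi(w_1)\lambda_\phi(w_2) = \lambda_\phi(w_1 w_2)$ and using $\lambda_\psi(w)^{-1}\iota(a)\lambda_\psi(w) = \iota(a \lhd w\pi)$ in $G_\psi$ yields the derivation identity. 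For $w \in \mathbb{N}(Q)$, the $M_\phi$-relation with $u = w^{-1}, v = w\ran$ gives $[w,0]_\phi = [w\ran, -w\phi]_\phi$, so
\[ \mu\lambda_\phi(w) = \mu([w\ran, -w\phi]_\phi) = [w\ran, -w\phi]_\psi, \]
while the analogous computation in $G_\psi$ yields $\lambda_\psi(w)\iota(w\delta) = [w, w\delta]_\psi = [w\ran, w\delta - w\psi]_\psi$; comparing second coordinates gives $w\delta = -w\phi + w\psi$, as required.

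The principal obstacle is the well-definedness check in ``$\Leftarrow$'': the bookkeeping combining the derivation identity, the triviality of the $\mathbb{N}(Q)$-action on $\A$, and the restriction hypothesis has to be orchestrated carefully for the terms to realign into the correct $M_\psi$-translate.
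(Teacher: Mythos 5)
Your proposal is correct and takes essentially the same route as the paper: the paper also defines $\delta$ on $w$ by comparing $[w,w\ran]_{\psi}^{-1}\,[w,w\ran]_{\phi}\mu$ with the embedded copy of $\A$, verifies the derivation identity and evaluates on $\mathbb{N}(Q)$ for the forward direction, and for the converse defines $[w,b]_{\phi}\mapsto[w,w\delta+b]_{\psi}$ and checks well-definedness by exactly the translate computation you describe. The only blemish is the parenthetical claim $u\delta\lhd wv=u\delta$, which is false in general (only the $\mathbb{N}(Q)$-factor $v$ acts trivially, so one only gets $u\delta\lhd wv=u\delta\lhd w$), but it is not needed: your displayed final expression $u\psi\lhd wv+(a+w\delta)\lhd v+v\psi$ follows directly from the derivation identity together with the substitutions $u\delta=-u\phi+u\psi$ and $v\delta=-v\phi+v\psi$, so the argument stands.
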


\begin{proof}Suppose that the extensions $\mathscr{E}_{\phi}$ and $\mathscr{E}_{\psi}$ are equivalent: that is, there exists an ordered functor $\mu :G_{\phi} \ra G_{\psi}$ making the diagram 
\begin{center} 
\begin{tikzcd} & G_{\phi}\ar{dd}{\mu} \ar{dr}{\pi_{\phi}} & \\ \A \ar{ru} \ar{rd}& & Q \\ &G_{\psi} \ar{ru}{\pi_{\psi}}& \end{tikzcd}
\end{center} 
commute. Then for all $a\in A_e$ and $(w,b)\in S$ we have:
\begin{align}
\left[ e,a\right]_{\phi}\mu &= \left[ e,a\right]_{\psi}, \label{calll} \\
\left[ w,b\right]_{\phi}\mu \pi_{\psi} = \left[ w,b\right]_{\phi}\pi_{\phi} &= w\pi = \left[ w,b\right]_{\psi}\pi_{\psi}.
\end{align}Therefore $\left[w,b\right]_{\psi}^{-1}\left[w,b\right]_{\phi}\mu \in \ker \pi_{\psi}$. Setting $b=w\textbf{r}$, we deduce that 
\begin{eqnarray}\label{cal}
\left[w,wr\right]_{\psi}^{-1}\left[ w,w\textbf{r}\right]_{\phi}\mu =\left[w\textbf{r}, a\right]_{\psi}
\end{eqnarray} for some $a\in A_{w\textbf{r}}$. Since the map $\A\ra G_{\psi}$, $a\mapsto \left[a\textbf{r},a\right]_{\psi}$ is injective, the element $a$ determined by equation \eqref{cal} is unique. Hence we get a mapping $\delta:\mathbb{F}(Q) \ra \A$ given by $w\mapsto a$, with $a\in \A_{w\textbf{r}}$, and satisfying
\begin{eqnarray}\label{yea}
 \left[w\textbf{r},w\delta \right]_{\psi}= \left[w,w\textbf{r}\right]_{\psi}^{-1} \left[w,w\textbf{r}\right]_{\phi} \mu 
\end{eqnarray}
We show that $\delta$ is an ordered derivation. Given $u,v\in \mathbb{F}(Q)$ such that the composition $uv$ exists, we have 
\begin{eqnarray*}
\left[ v\textbf{r},(uv)\delta\right]_{\psi}&=&\left[uv,v\textbf{r}\right]_{\psi}^{-1}\left[ uv,v\textbf{r}\right]_{\phi}\mu \\
&=& \left[v,v\textbf{r}\right]_{\psi}^{-1} \left[u,u\textbf{r}\right]_{\psi}^{-1} \left[ u,u\textbf{r}\right]_{\phi}\mu \left[v,v\textbf{r}\right]_{\phi} \mu \\
&=& \left[ v,v\textbf{r}\right]_{\psi}^{-1}\left[u,u\textbf{r}\right]_{\psi}^{-1}\left[u,u\textbf{r}\right]_{\phi}\mu \left[v,v\textbf{r}\right]_{\psi} \left[v,v\textbf{r}\right]_{\psi}^{-1}\left[v,v\textbf{r}\right]_{\phi}\mu \\
&=& \left[v,v\textbf{r}\right]_{\psi}^{-1}\left[u\textbf{r},u\delta \right]_{\psi} \left[v,v\textbf{r}\right]_{\psi} \left[v\textbf{r},v\delta \right]_{\psi} \quad \text{by \eqref{yea}}\\
&=& \left[v\textbf{r},(u\delta)\lhd v+v\delta \right]
\end{eqnarray*}  and therefore $(uv)\delta=(u\delta)\lhd v+v\delta$. Hence $\delta$ is an ordered derivation $\mathbb{F}(Q)\ra \A$. We now show that $(-\phi + \psi)$  is a restriction of $\delta$.

Suppose that $w\in \mathbb{N}(Q)$. Then recalling that $w\textbf{d}=w\textbf{r}$ we have \begin{eqnarray}
\left[ w,w\textbf{r}\right]_{\phi}= \left[ (w,w\textbf{r})(w^{-1},w^{-1}\phi)\right]_{\phi}=\left[w\textbf{r}, w^{-1}\phi \right]_{\phi}
\end{eqnarray} and so \begin{eqnarray*}
\left[w\textbf{r},w\delta \right]_{\psi}&=& \left[ w\textbf{d},w^{-1}\psi \right]_{\psi}^{-1} \left[w\textbf{r},w^{-1}\phi \right]_{\phi} \mu \\
&=& \left[ w\textbf{r},w\psi \right]_{\psi} \left[w\textbf{r},w^{-1}\phi \right]_{\phi} \mu \\
&=& \left[ w\textbf{r}, w\psi \right]_{\psi} \left[ w\textbf{r},w^{-1}\phi \right]_{\psi} \,. \end{eqnarray*} 
Using \eqref{calll}, we have
$\left[w\textbf{r},w\delta \right]_{\psi}= \left[ w\textbf{r},w\psi -w\phi \right]$ and, since 
$a\mapsto \left[ a\textbf{r},a\right]_{\psi}$ is injective, we deduce that $w\delta =w(-\phi +\psi)$. Therefore if the extensions $\mathscr{E}_{\phi}$ and $\mathscr{E}_{\psi}$ are equivalent then $(-\phi +\psi)$ is the restriction of an ordered derivation $\delta :\mathbb{F}(Q)\ra \A$. 

For the converse, let $\eta:\mathbb{F}(Q) \ra \A$ be an ordered derivation and $\phi \in \homogpd_{\mathbb{F}(Q)} (\mathbb{N}(Q), \A)$. Since $\eta$ restricts to an $\mathbb{F}(Q)$--map $\mathbb{N}(Q)\ra \A$ and $\homogpd_{\mathbb{F}(Q)} (\mathbb{N}(Q), \A)$ is closed under pointwise addition, the sum  $\phi +\eta \in \homogpd_{\mathbb{F}(Q)} (\mathbb{N}(Q), \A)$. Set $\psi =\phi +\eta$ and define a mapping $\nu :G_{\phi}\ra G_{\psi}$ by $\left[w,b\right]_{\phi} \mapsto \left[w,w\eta +b\right]_{\psi}$. Then for $u,v\in \mathbb{N}(Q)$ we have \[(u,u\phi)(w,b)(v,v\phi)=(uw,(u\phi)\lhd w+b)(v,v\phi)=(uwv,(u\phi)\lhd w+b+v\phi) \] since $\mathbb{N}(Q)$ acts trivially on $\A$, and so \begin{eqnarray*}
\left[(u,u\phi)(w,b)(v,v\phi)\right]_{\phi}\nu &=& \left[ uwv,(u\phi) \lhd w+b+v\phi \right]_{\phi}\nu  \\
&=& \left[ uwv,(uwv)\eta + (u\phi)\lhd w+b+v\phi \right]_{\psi} \\
&=& \left[ uwv, (u\eta) \lhd wv +(wv)\eta +(u\phi)\lhd w+b+v\phi \right]_{\psi} \\
&=& \left[ uwv, (u\eta)\lhd w+w\eta +v\eta + (u\phi)\lhd w+b+v\phi \right]_{\psi} \\
&=& \left[ uwv,(u\psi)\lhd w+v\psi +w\eta +b \right]_{\psi} \\
&=&\left[ (u,u\psi)(w,w\eta +b)(v,v\psi)\right]_{\psi} \\
&=& \left[ w,w\eta +n \right]_{\psi} = \left[ w,b\right]_{\phi}\nu 
\end{eqnarray*} and so $\nu$ is well-defined. It is then easy to see that $\nu$ is an ordered functor, and that 
\begin{center}
\begin{tikzcd} & G_{\phi}\ar{dd}{\nu} \ar{dr}{\pi_{\phi}} & \\ \A \ar{ru} \ar{rd}& & Q \\ &G_{\psi} \ar{ru}{\pi_{\psi}}& \end{tikzcd}
\end{center}
commutes, so that $\mathscr{E}_{\phi}$ and $\mathscr{E}_{\psi}$ are equivalent.
\end{proof}

\begin{cor}\label{one} 
The set of equivalence classes of extensions of $\A$ by $Q$ in $\mathscr{E}$ is in one-to-one correspondence with the cokernel of the restriction map 
\[ \rho: \Der(\mathbb{F}(Q),\A)\ra \homogpd_{\mathbb{F}(Q)} (\mathbb{N}(Q), \A) \; .\] 
\end{cor}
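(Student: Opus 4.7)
The plan is to exhibit the set $\mathscr{E}$ as a principal homogeneous space for the abelian group $\homogpd_{\mathbb{F}(Q)} (\mathbb{N}(Q), \A)$, and then use the preceding proposition to see that equivalence of extensions corresponds precisely to translation by elements of $\im \rho$.

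First I would fix the trivial homomorphism $\tau \in \homogpd_{\mathbb{F}(Q)} (\mathbb{N}(Q), \A)$ as a basepoint, noting that (as remarked after Lemma \ref{exten}) $\mathscr{E}_\tau$ corresponds to $G_\tau = Q \ltimes \A$. By definition the action $\mathscr{E}_\phi \lhd \alpha = \mathscr{E}_{\phi + \alpha}$ is transitive, and moreover it is free in the sense that $\mathscr{E}_\phi = \mathscr{E}_\psi$ (as labelled objects) precisely when $\phi = \psi$, so the map $\phi \mapsto \mathscr{E}_\phi = \mathscr{E}_\tau \lhd \phi$ is a bijection $\homogpd_{\mathbb{F}(Q)} (\mathbb{N}(Q), \A) \to \mathscr{E}$.

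Next I would apply the preceding proposition to translate the equivalence relation on $\mathscr{E}$ across this bijection. That proposition says that $\mathscr{E}_\phi \sim \mathscr{E}_\psi$ if and only if $-\phi + \psi$ lies in the image of the restriction map $\rho : \Der(\mathbb{F}(Q), \A) \to \homogpd_{\mathbb{F}(Q)} (\mathbb{N}(Q), \A)$. Hence under the bijection above, equivalence of extensions corresponds exactly to the coset equivalence on $\homogpd_{\mathbb{F}(Q)} (\mathbb{N}(Q), \A)$ modulo the subgroup $\im \rho$.

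Combining these two observations yields the asserted bijection
\[
\mathscr{E}/{\sim} \;\longleftrightarrow\; \homogpd_{\mathbb{F}(Q)} (\mathbb{N}(Q), \A) \, / \, \im \rho \;=\; \coker \rho.
\]
There is no real obstacle here: the corollary is essentially a reformulation of the preceding proposition once one has identified $\mathscr{E}$ as a torsor under $\homogpd_{\mathbb{F}(Q)} (\mathbb{N}(Q), \A)$. The only point requiring a brief verification is that distinct $\phi$ yield genuinely distinct labels $\mathscr{E}_\phi$ in $\mathscr{E}$, which is immediate from the construction of $M_\phi = \{(w, w\phi) : w \in \mathbb{N}(Q)\}$, since the subgroupoid $M_\phi$ determines $\phi$.
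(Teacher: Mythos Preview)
Your proposal is correct and follows exactly the reasoning the paper intends: the paper states Corollary~\ref{one} without proof, as an immediate consequence of the preceding proposition together with the evident bijection $\phi \mapsto \mathscr{E}_{\phi}$ between $\homogpd_{\mathbb{F}(Q)}(\mathbb{N}(Q),\A)$ and $\mathscr{E}$. Your explicit verification that distinct $\phi$ give distinct $M_{\phi}$ (and hence distinct members of $\mathscr{E}$) is a helpful detail that the paper leaves implicit.
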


\begin{prop}\label{ex} An extension $\A\xrightarrow{\iota} G\xrightarrow{\varphi}Q$ is equivalent to an extension $\mathscr{E}_{\alpha}$ for some $\alpha \in \homogpd_{\mathbb{F}(Q)} (\mathbb{N}(Q), \A)$. 
\end{prop}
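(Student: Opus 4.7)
The plan is to lift the extension through $\mathbb{F}(Q)$ using its universal property. Since $\varphi$ is surjective and identity-separating, it restricts to a bijection $G_0 \cong Q_0$, so we may choose an order-preserving section $\tau : Q \to G$ of $\varphi$ that fixes objects. By the universal property of $\mathbb{F}(Q)$ as the free ordered groupoid on the graph of $Q$, $\tau$ extends uniquely to an ordered functor $\tilde\pi : \mathbb{F}(Q) \to G$ satisfying $\tilde\pi\varphi = \pi$.

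Restricting $\tilde\pi$ to $\mathbb{N}(Q) = \ker \pi$ lands in $\ker \varphi = \iota(\A)$, so we may define $\alpha : \mathbb{N}(Q) \to \A$ by the formula $\iota(w\alpha) = (w\tilde\pi)^{-1}$. The given $Q$--module action on $\A$ is realised in $G$ as conjugation, $\iota(a \lhd q) = g^{-1}\iota(a) g$ for any $g \in \varphi^{-1}(q)$. Taking $g = w\tilde\pi$ in this identity produces a short calculation showing that $\alpha$ is an ordered functor respecting the $\mathbb{F}(Q)$--actions (by conjugation on $\mathbb{N}(Q)$ and via $\pi$ on $\A$), so $\alpha \in \homogpd_{\mathbb{F}(Q)}(\mathbb{N}(Q),\A)$.

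I next define $\Phi : S = \mathbb{F}(Q) \ltimes \A \to G$ by $\Phi(w,a) = w\tilde\pi \cdot \iota(a)$. The same conjugation identity yields
\[ \Phi((w,a)(w',a')) = (ww')\tilde\pi \cdot \iota(a\lhd w' + a') = w\tilde\pi \cdot \iota(a) \cdot w'\tilde\pi \cdot \iota(a') = \Phi(w,a)\Phi(w',a'), \]
so $\Phi$ is an ordered functor, and surjectivity follows from that of $\varphi$: for $g \in G$, pick $w \in \mathbb{F}(Q)$ with $w\pi = g\varphi$; then $(w\tilde\pi)^{-1}g \in \iota(\A)$ supplies the second component. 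An element $(w,a)$ lies in $\Phi^{-1}(G_0)$ exactly when $w\pi \in Q_0$ (forcing $w \in \mathbb{N}(Q)$) and $\iota(a) = (w\tilde\pi)^{-1} = \iota(w\alpha)$, so $\Phi^{-1}(G_0) = M_\alpha$, and $\Phi$ descends to an ordered isomorphism $\mu : G_\alpha \to G$. The identities $\mu([e,a]_\alpha) = \iota(a)$ (since $\tilde\pi$ fixes objects) and $\mu([w,a]_\alpha)\varphi = w\pi = [w,a]_\alpha\pi_\alpha$ then show that $\mu$ is an equivalence of extensions.

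The main obstacle is in the ordered setting. One must construct an order-preserving section $\tau$, and one can attempt this using the restriction axiom OG3 in $G$: a lift $\bar{q}_2$ of $q_2 \geq q_1$ produces the lift $(q_1\dom\,|\,\bar{q}_2) \leq \bar{q}_2$ of $q_1$. One must then track the order through $\tilde\pi$, the componentwise order on $S$, and the quotient order on $G_\alpha$ to verify that $\mu$ is an isomorphism of \emph{ordered} groupoids, not merely a groupoid isomorphism.
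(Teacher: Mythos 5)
Your argument is essentially the paper's own proof: your lift $\tilde\pi$ is the paper's $\pi_*$ obtained from freeness of $\mathbb{F}(Q)$, your $\alpha$ (defined by $\iota(w\alpha)=(w\tilde\pi)^{-1}$) is the paper's $\alpha: w\mapsto w^{-1}\pi_*$, and your $\Phi:\mathbb{F}(Q)\ltimes\A\to G$ is the paper's $\xi$, with the kernel identified with $M_\alpha$ in the same way. The extra points you flag (choosing an order-preserving section and checking that the induced map $G_\alpha\to G$ is an isomorphism of \emph{ordered} groupoids) are left implicit in the paper as well, so your account is, if anything, slightly more careful while following the same route.
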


\begin{proof}
Lift the quotient map $\pi$ to $\pi_{*}:\mathbb{F}(Q)\ra G$ making the diagram 
\begin{center} 
\begin{tikzcd} 
\mathbb{N}(Q)\rar &\mathbb{F}(Q) \ar{dr}{\pi} \ar{dd}{\pi_*} & \\ &&Q \\ \A\ar{r}{\iota} &G \ar{ru}{\varphi} & 
\end{tikzcd} 
\end{center} 
commute. Then $\mathbb{N}(Q)\pi_* \subseteq \A$ and we may define $\xi: \mathbb{F}(Q)\ltimes \A \ra G$ by $(w,a)\xi= (w\pi_*)(a\iota)$ and $\ker \xi = \{ (w,w^{-1}\pi_*): w\in \mathbb{N}(Q) \}$.
Since each $A_e$ is abelian, the map $\alpha :w\mapsto w^{-1}\pi_*$ is a homomorphism $\mathbb{N}(Q)\ra \A$, and $\mathscr{E}$ is equivalent to $\mathscr{E}_{\alpha}$.
\end{proof}

\begin{remark}
In \cite{Mthesis}, Matthews discusses the concept of factor sets for modules over ordered groupoids. We note that we can recover the factor set discussed by Matthew from our construction. The restriction of the map $\pi_*:\mathbb{F}(Q)\ra G$ to the elements $\lfloor q \rfloor^{-1} \cdot \lfloor (p|qq^{-1})\rfloor^{-1} \cdot  \lfloor p\ast q\rfloor \in \mathbb{N}(Q)$ is a factor set discussed in \cite{Mthesis}.
\end{remark}

\subsection{Cohomology and extensions of ordered groupoids}
\begin{theorem}\label{onetoone} 
The set of equivalence classes of extensions of $\A$ by $Q$ is in one-to-one correspondence with the second cohomology group $H^2(Q^I,\A^0)$. 
\end{theorem}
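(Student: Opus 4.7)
The plan is to combine three ingredients already established: Proposition~\ref{ex}, showing that every extension is equivalent to some $\mathscr{E}_\alpha \in \mathscr{E}$; Corollary~\ref{one}, which gives a bijection between equivalence classes in $\mathscr{E}$ and $\coker(\rho)$; and the five-term exact sequence of Theorem~\ref{five} applied to the free presentation
\[ \mathbb{N}(Q) \hookrightarrow \mathbb{F}(Q) \xrightarrow{\pi} Q. \]

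First, by Proposition~\ref{ex}, every extension of $\A$ by $Q$ is equivalent to some $\mathscr{E}_\alpha \in \mathscr{E}$, so the set of equivalence classes of all extensions coincides with the set of equivalence classes inside $\mathscr{E}$. By Corollary~\ref{one} this set is in bijection with $\coker(\rho)$. Exactness of the five-term sequence at $H^2(Q^I,\A^0)$ then yields
\[ \coker(\rho) \cong \ker\left( \varpi : H^2(Q^I, \A^0) \to H^2(\mathbb{F}(Q)^I, \A^0) \right). \]
Thus it will suffice to prove that $H^2(\mathbb{F}(Q)^I, \A^0) = 0$ for every $Q$--module $\A$, forcing $\varpi=0$ and hence $\coker(\rho) \cong H^2(Q^I,\A^0)$.

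The vanishing of $H^2(\mathbb{F}(Q)^I, \A^0)$ is the main obstacle. By Theorem~\ref{identitycohomo} it is equivalent to $\Ext^1_{\mathbb{F}(Q)}(K\mathbb{F}(Q), \A) = 0$ for all $\A$, i.e.\ to $K\mathbb{F}(Q)$ being a projective $\mathbb{F}(Q)$--module. The approach is to show that $K\mathbb{F}(Q)$ is in fact free on a basis indexed by the arrows of the underlying graph of $Q$, in direct analogy with the classical fact that the augmentation ideal of a free group is a free module on $\{x-1\}$. By Example~\ref{der_of_id}(a), $\mathbb{F}(Q)$--maps $K\mathbb{F}(Q) \to M$ correspond to derivations $\mathbb{F}(Q) \to M$, and via Proposition~\ref{chi} such derivations correspond to ordered functors $\mathbb{F}(Q) \to \mathbb{F}(Q) \ltimes M$ over $\mathbb{F}(Q)$. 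The universal property of the free ordered groupoid on the underlying ordered graph of $Q$ then lets one specify such a functor by an order-preserving lift of the generating arrows, matching precisely the universal property of a free module on that arrow set.

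Assembling the chain of identifications
\[ \{\,\text{extensions of }\A\text{ by }Q\,\}/{\sim} \; \cong \; \coker(\rho) \; \cong \; \ker(\varpi) \; = \; H^2(Q^I, \A^0) \]
then produces the desired one-to-one correspondence. I expect the conceptual steps (Propositions~\ref{ex} and~\ref{one}, and the five-term sequence) to be routine to invoke, with all the real work concentrated in the freeness of $K\mathbb{F}(Q)$; this may warrant being isolated as a preparatory lemma before the theorem.
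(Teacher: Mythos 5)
Your proposal is correct and takes essentially the same route as the paper: reduce via Proposition~\ref{ex} and Corollary~\ref{one} to $\coker(\rho)$, apply the five-term sequence to $\mathbb{N}(Q) \ra \mathbb{F}(Q) \ra Q$, and kill the last term by showing the augmentation module of the free ordered groupoid (with adjoined identity) is projective via the correspondence between module maps out of it, derivations, and ordered functors into semidirect products, lifted using freeness. The only cosmetic difference is that the paper establishes projectivity of $K\mathbb{F}(Q^I)$ directly by this lifting argument rather than asserting freeness on the arrow set (which, given the order-preservation constraint on lifts, is the safer statement and is all that is needed).
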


\begin{proof} 
From the extension  $\mathbb{N}(Q)\ra \mathbb{F}(Q)\xrightarrow{\pi} Q$ of ordered groupoids we obtain, by
Lemma \ref{KQ}, the short exact sequence  $\mathbb{N}^{ab}(Q) \ra D_{\pi} \ra KQ$ of $Q$--modules. By Theorem \ref{five} and using the fact that $\Mod{Q}(\mathbb{N}^{ab}(Q),\A)= \homogpd_{\mathbb{F}(Q)} (\mathbb{N}(Q), \A)$ we obtain the five--term exact sequence 
\begin{align}
\label{free_five}
 0\ra \Der_{}(Q,\A)\ra \Der_{\pi}(\mathbb{F}(Q),\A) \ra \homogpd_{\mathbb{F}(Q)} & (\mathbb{N}(Q), \A) \ra \nonumber\\
&~ H^2(Q^I,\A^0) \xrightarrow{\varpi} H^2(\mathbb{F}(Q^I),\A^0)\; . \end{align} 
For any $Q^I$--module $\B$, a $Q^I$--map $K\mathbb{F}(Q^I) \ra \B$ corresponds to a homomorphism $\mathbb{F}(Q^I)\ra \mathbb{F}(Q^I) \ltimes \B$. If $\A \ra \B$ is an epimorphism of $Q^I$--modules then we obtain a lift 
\begin{center}
\begin{tikzcd} & \mathbb{F}(Q^I)\ar{d} \ar{dl}  \\ \mathbb{F}(Q^I) \ltimes \A \ar{r}& \mathbb{F}(Q^I) \ltimes \B \end{tikzcd}
\end{center} 
via the freeness of $\mathbb{F}(Q^I)$, and so we get the corresponding lift $K\mathbb{F}(Q^I) \ra \A$. Thus $K\mathbb{F}(Q^I)$ is projective and hence the sequence $K\mathbb{F}(Q^I) \ra \Z \mathbb{F}(Q^I) \ra \Delta \mathbb{Z}$ is a projective resolution of $\Delta \mathbb{Z}$. It follows that $H^n(\mathbf{F}(Q^I),\A^0)=0$ for $n>0$, and the sequence \eqref{free_five}
becomes 
\[ 0\ra \mathrm{Der}(Q,\A)\ra \mathrm{Der}_{\pi}(\mathbb{F}(Q),\A) \ra \homogpd_{\mathbb{F}(Q)} (\mathbb{N}(Q), \A) \xrightarrow{\varpi} H^2(Q^I,\A^0) \ra 0 \; . \] 
Therefore $\varpi$ is a bijection from the cokernel of the restriction map $\mathrm{Der}_{\pi}(\mathbb{F}(Q),\A) \ra \homogpd_{\mathbb{F}(Q)} (\mathbb{N}(Q), \A)$ to $H^2(Q^I,\A^0)$  and thus by corollary \ref{one} the result follows.
\end{proof}

\end{document}